\documentclass[12pt,leqno]{amsart}
\usepackage{anyfontsize}

\usepackage{amsmath, amsthm, amsfonts, amssymb, mathrsfs}
\usepackage{graphicx, color}
\usepackage[bookmarksnumbered, colorlinks, plainpages]{hyperref}
\usepackage{cancel}
\usepackage{comment}
\usepackage{xcolor}
\definecolor{darkgreen}{rgb}{0,0.6,0}

\newcommand{\Chi}{\raise 1.5pt\hbox{$\chi$}}

\newcommand{\beq}{\begin{equation}}
\newcommand{\eeq}[1]{\label{#1}\end{equation}}

\newcommand{\avg}{\raise 0pt\hbox{$-$}\hskip -10.7pt\int}

\newcommand{\restrict}[1]{|_{\raise-2pt\hbox{$\scriptstyle #1$}}}

\newtheorem{lem}{Lemma}[section]

\newtheorem{thm}[lem]{Theorem}

\newtheorem{defn}[lem]{Definition}

\newtheorem{remark}[lem]{Remark}

\numberwithin{equation}{section}

\begin{document}
\fontsize{13.4}{15}\selectfont  

\title[Quasilinear equations for H\"ormander $q$-sub-Laplacians]{Quasilinear equations  with exponential growth for H\"ormander $q$-sub-Laplacians on stratified Lie groups}

\author[J. A. León Tordecilla]{Jesus A. León Tordecilla$^1$}
\author[D. Cardona]{Duv\'an Cardona$^{2,3*}$}

\address{
  $^{1}$Jesus A. León Tordecilla:
  \endgraf
  Área de Ciencias Básicas Exactas, Grupo de investigación Giofop,
  \endgraf
  Universidad del Sinú, Seccional Cartagena, Avenue El bosque Trans. 54 \# 30-72,
  \endgraf
  Cartagena, 130001, Colombia
  \endgraf
  {\itshape E-mail address: jesus.leon@unisinu.edu.co}  \endgraf
  \texttt{}
  \bigskip
  \endgraf
  $^{2,3}$ Duv\'an Cardona:
  \endgraf
  Department of Mathematics: Analysis, Logic and Discrete Mathematics
  \endgraf
  Ghent University,
  \endgraf
  Ghent-Belgium.
  \endgraf
  {\it E-mail address:} {\rm duvanc306@gmail.com, duvan.cardonasanchez@ugent.be.}
  \endgraf
 Department of Mathematics
  \endgraf
  Pontificia Universidad Javeriana, 
  \endgraf
  Bogot\'a-Colombia.
  \endgraf
  {\itshape E-mail addresses:}  
  \endgraf \texttt{duvanc306@gmail.com},  
  \endgraf
  \texttt{duvan.cardonasanchez@ugent.be}, 
  \endgraf
  \texttt{cardonaduvan01@javeriana.edu.co}
}

\thanks{Jesus A. León Tordecilla$^1$ has been supported by Universidad del Sinú, Cartagena-Colombia.   Duv\'an Cardona$^{2,3,4,*}$  has been  supported  by the FWO  Odysseus  1  grant  G.0H94.18N:  Analysis  and  Partial Differential Equations and by the Methusalem programme of the Ghent University Special Research Fund (BOF)
(Grant number 01M01021), by the FWO Fellowship
Grant No 1204824N and by the FWO Grant K183725N of the Belgian Research Foundation FWO. He also has been supported by the Oberwolfach Leibniz Fellow of the Mathematical Institute of Research of Oberwolfach, MFO-Germany, Project F2511 (2026) and by the Department of Mathematics of the Pontificia Universidad Javeriana, Bogot\'a-Colombia.   The authors have been benefited by the support of the Research Group: Analysis and PDE in Developing Countries of ICMAM Latin America (International Community of Mathematicians from Latin America).  }
\date{}

\begin{abstract} We prove the existence of positive weak solutions for a quasilinear  equation with exponential nonlinearity on arbitrary stratified Lie groups for \emph{horizontal $q$-Laplacians,} also called  $q$-sub-Laplacians on these groups. The nonlinearity combines a concave term with an exponential growth that can be subcritical, critical, or supercritical with respect to the Trudinger--Moser inequality for Lorentz spaces on stratified Lie groups. We prove a version of this inequality in this paper. Since the problem is not variational in the natural energy space, classical minimisation or critical-point techniques do not apply directly.  Positive solutions to the resulting semilinear equation are then obtained via a carefully designed Galerkin method applied to this setting. Our main theorem recovers previous known results on $\mathbb{R}^n$ in the complete range $q\in (1,\infty),$ and the previous known result on the  Heisenberg group $\mathbb{H}^n$ for $q=2n+2,$ and is extended here in the full range $q\in (1,\infty)$. Other particular and fundamental cases, including the Engel group, are discussed.
\vspace{0.6cm}

\noindent\textbf{2020 Mathematics Subject Classification:} 35A35, 35J10, 35J62, 35R03.\\

\noindent\textbf{Keywords:} Stratified Lie groups, Schrödinger equation, approximation scheme, Trudinger-Moser inequality, Folland-Stein (Sobolev) spaces, quasilinear elliptic equations.
\end{abstract}

\maketitle

\vspace{0.5cm}

\tableofcontents
\allowdisplaybreaks
\section{Introduction}

\subsubsection{Overview}
Motivated by the recent progress in the study of partial differential equations on stratified Lie groups (see, e.g., the monographs by Ruzhansky and his collaborators \cite{Ruzhansky:Fischer:Book, Ruzhansky:Suragan:Book}), a framework that combines sub-Riemannian geometry with the analysis of hypoelliptic operators, in the present work we investigate the existence of positive weak solutions to the quasilinear equation \eqref{prob:inicial} for the $q$-sub-Laplacian (also called here, the horizontal $q$-Laplacian) with exponential nonlinearity. We consider this to be an interesting nonlinear subelliptic model whose analysis requires delicate spectral inequalities as the version investigated here of the Trudinger Moser inequality for Lorentz spaces and that is adapted to \eqref{prob:inicial}. 

We also recall that the class of stratified Lie groups includes several important geometric structures, such as the Euclidean space $\mathbb{R}^n$, the Heisenberg group $\mathbb{H}^n$, more general Heisenberg-type groups, the Engel group $\mathbb{E}$, and free nilpotent Lie groups arising from free nilpotent Lie algebras; see, e.g., \cite{Le:Donne}  and Corwin and Greenleaf \cite{CorwinGreenleafBook}.

The problem \eqref{prob:inicial} will be associated to the horizontal $q$-Laplacian on a stratified Lie group, with the case $q=2,$ being reduced to the canonical sub-Laplacian associated with a system of left-invariant vector fields satisfying H\"ormander's celebrated rank condition, see  H\"ormander \cite{Hormander}. Sub-Laplacians are the prototype of hypoelliptic operators on stratified groups, and their analysis has been thoroughly developed by Folland \cite{Follan1} and Folland and Stein \cite{FollandStein}, among many others. 

On stratified Lie groups $\mathbb{G}$ of homogeneous dimension $Q$, and in the case $q=Q$, the existence of the fundamental solution for the horizontal $q$-Laplacian was established by Balogh, Manfredi, and Tyson in \cite[Page 41]{BMT2003}. In \cite{ChoudhuriTavaresAlvarez2025}, a critical hypoelliptic problem for the horizontal $q$-Laplacian on stratified Lie groups was investigated; see also Choudhuri and Repovš \cite{ChoudhuriRepovs2023}. For Green's identities, the comparison principle, and the uniqueness of positive solutions for a class of nonlinear $q$-sub-Laplacian equations on stratified Lie groups, we refer the reader to Ruzhansky and Suragan \cite{RuzhanskySuragan2020}. We also refer to  Garofalo and Lanconelli \cite{GarofaloLanconelli1990}  where the authors obtained existence and nonexistence results using Rellich-Pohozaev type inequalities and have investigated uncertainty type principle and unique continuation for the Heisenberg group and $q=2$.

Our motivation to investigate the quasilinear problem \eqref{prob:inicial} on stratified Lie groups here, also comes  from the fundamental {\it Acta Mathematica} work due to Rothschild and Stein \cite{Rothschild:Stein:1976}, who observed  that nilpotent Lie groups play a fundamental role in obtaining sharp subelliptic estimates for differential operators on manifolds. In view of the celebrated Rothschild--Stein lifting theorem, any sum of squares of Hörmander vector fields defined on a manifold can be approximated, in a remarkably precise way, by a left-invariant sub-Laplacian on a suitable stratified Lie group as the considered in this paper. 

The lifting theorem due to Rothschild and Stein \cite{Rothschild:Stein:1976} has made the systematic study of partial differential equations on stratified Lie groups not only natural but genuinely indispensable, and has given rise to a rich and highly active line of research that links Lie theory and analysis of PDEs. Moreover, in recent decades sub-Laplacians on stratified groups have attracted rapidly growing attention for their fundamental approximation properties, in view of the Rothschild and Stein lifting method, as emphasized in the panoramic expositions of Danielli and Nhieu \cite{Danielli:Nhieu:2007}. We also refer the reader to Garofalo and Lanconelli \cite{Garofal1992} and Gromov \cite{Gromov:1996}.

\subsubsection{Quasilinear equations with exponential growth on stratified Lie groups} 
In order to precise the problem considered here, let $\mathbb{G}=(\mathbb{R}^n,\circ)$ be a  \emph{stratified Lie group} of
homogeneous dimension $Q$, let $\Omega\subset \mathbb{G}$ be a  bounded open subset of $\mathbb{G}$ of smooth boundary $\partial\Omega$, let $0<p<q-1,$  and let us consider $\lambda>0$ to be a parameter. We study  the quasilinear problem with homogeneous boundary conditions on $\mathbb{G}$:
\begin{equation}\label{prob:inicial}
\left \{
\begin{aligned}
&-\mathcal{L}_qu =\lambda u^{p}+\text{exp}(\alpha u^{\frac{Q}{Q-1}}),&  && \text{ in } & \Omega, \\
& u>0, & && \text{ in } &\Omega,\\
& u =0 ,& && \text{ on } &\partial \Omega,
\end{aligned}
\right.
\end{equation}
where $\mathcal{L}_q$ is a fixed \emph{horizontal $q$-Laplacian} (or $q$-sub-Laplacian) in $\mathbb{G}$, which is defined as
\[
\mathcal{L}_q (\cdot)
=
\operatorname{div}_\mathbb{G}
\bigl(
|\nabla_\mathbb{G} (\cdot)|^{q-2}\nabla_\mathbb{G} (\cdot)
\bigr),
\qquad
1<q<\infty.
\]
Here, we have denoted by $\nabla_{\mathbb{G}}(\cdot)$ an arbitrary horizontal gradient,  that is,
\begin{equation}\label{H:L:G}
\nabla_{\mathbb{G}}:=(X_1,X_2,...,X_N),
\end{equation}
where $\{X_j\}$, $j=1,\ldots,N$, is a system of H\"ormander vector fields, and then spanning the Lie algebra $\mathfrak{g}\equiv T_0\mathbb{G}$ of $\mathbb{G}.$ We will refer to these a system of horizontal left--invariant vector fields on $\mathbb{G}$. 

To our knowledge, the study of quasilinear problems with exponential growth terms has not been considered in the general setting of  H\"ormander $q$--sub--Laplacians, with the case of the Heisenberg group investigated by the second author and Agila in \cite{Tordecilla:Aguila:2025} and in the Euclidean space \cite{Araujo, Tordecilla}. Our conjecture is that the methods in \cite{Tordecilla:Aguila:2025} can be extrapolated to the more general setting of stratified Lie groups. 

However, it is important to emphasise that the framework of stratified Lie groups cannot be straightforwardly adapted from the approaches and techniques developed for the Heisenberg group. This is due to several additional considerations related to Sobolev--type inequalities, Trudinger--type inequalities, and other analytical tools, which in the setting of  stratified Lie groups must be carefully interfaced with the analysis of the $q$--sub--Laplacian.

\subsubsection{Some functional inequalities}
 It has been known since the work of Varopoulos \cite{Varopoulos1, Varopoulos2} and Saloff--Coste (see also Folland \cite{Follan1}), that the following version of the Sobolev inequality
holds on $\mathbb{G}$:
\begin{equation}\label{Sov}
\left(
\int_{\mathbb{G}} |f(x)|^s\,dx
\right)^{1/s}
\leq
C_{p,s}
\left(
\int_{\mathbb{G}} |\nabla_{\mathbb{G}} f(x)|^p\,dx
\right)^{1/p},
\end{equation}
provided that $1\leq p<Q$, and $\frac{1}{p}-\frac{1}{s}=\frac{1}{Q}$, where $|\nabla_{\mathbb{G}} f|$ stands for the norm of the
\emph{horizontal gradient} in \eqref{H:L:G} of a function
$f\in C_0^\infty(\mathbb{G})$. By completion of
$C_0^\infty(\mathbb{G})$ under the Sobolev norm $\|f\|_p+\|\nabla_{\mathbb{G}} f\|_p$, the above inequality holds for functions in the horizontal Sobolev space $W_0^{1,p}(\mathbb{G})$.

Here we observe, as pointed out in \cite{Ruzhansky:Suragan:Book}, that the Sobolev spaces $W^{r,p}(\mathbb{G})$ (also known as Folland--Stein spaces) on stratified Lie groups were first introduced by Folland \cite{Follan1}. Several further properties were then developed in the book by Folland and Stein \cite{FollandStein}. For a comprehensive treatment of the subject, we refer to Ruzhansky and Suragan \cite{Ruzhansky:Suragan:Book}.

Observe that in the case $p=Q,$ the Sobolev inequality~\eqref{Sov} turns into the Trudinger
inequality stated as follows. There exist constants $\alpha_Q>0$ y $
c_0>0$ such that for any domain $
\Omega\subset \mathbb{G}$, $|\Omega|<\infty$, and
$f\in W_0^{1,Q}(\Omega)$, the following inequality holds:
\begin{equation}\label{TM}    
\frac{1}{|\Omega|}
\int_{\Omega}
\exp\left(
\alpha_Q
\frac{|f(x)|^{Q'}}
{\|\nabla_{\mathbb{G}} f\|_Q^{Q'}}
\right)\,dx
\leq c_0,
\end{equation}
where $Q'=\frac{Q}{Q-1}$ is the dual exponent of $Q$ (see \cite[p. 36]{balogh2003fundamental}). 

\subsubsection{Main result}
The following is our main theorem. 

\begin{thm}\label{T.Gb} Let $\mathbb{G}$ be a stratified Lie group and let $1<q<\infty.$
Let us consider $\Omega\subset\mathbb{G}$ be a  bounded open domain with smooth boundary $\partial\Omega$, and fix  $p$ such that $0<p<q-1$. Then there exist $\lambda^*:=\lambda^*(\Omega)>0$, and $\alpha^*:=\alpha^*(\Omega)>0$, $($depending only on $p$, $Q$, $q$, and $\Omega$$)$ such that, for every
$0<\lambda<\lambda^*$, and all $0<\alpha<\alpha^*$, the subelliptic boundary-value problem 
\begin{equation}
\left \{
\begin{aligned}
&-\mathcal{L}_qu =\lambda u^{p}+\text{exp}(\alpha u^{\frac{Q}{Q-1}}),&  && \text{ in } & \Omega, \\
& u>0, & && \text{ in } &\Omega,\\
& u =0 ,& && \text{ on } &\partial \Omega,
\end{aligned}
\right.
\end{equation}
admits at least one positive weak solution $u\in W_0^{1,q}(\Omega)$.
\end{thm}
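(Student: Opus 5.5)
The plan is to use a Galerkin approximation scheme combined with a Brouwer-type fixed point lemma: if $F:\mathbb{R}^m\to\mathbb{R}^m$ is continuous and $\langle F(\xi),\xi\rangle\ge 0$ on $|\xi|=r$, then $F$ vanishes somewhere in the closed ball of radius $r$. Since $W_0^{1,q}(\Omega)$ is separable, fix a countable family $\{w_j\}\subset C_0^\infty(\Omega)$ whose span is dense, and let $W_m=\mathrm{span}\{w_1,\dots,w_m\}$ carry the norm $\|u\|=\|\nabla_{\mathbb{G}}u\|_{L^q(\Omega)}$. To handle positivity and the fractional power I will work with the truncated nonlinearity
\[
f(u)=\lambda (u^+)^p+\exp\big(\alpha (u^+)^{Q/(Q-1)}\big).
\]
On $W_m$ I define
\[
\langle F_m(u),v\rangle=\int_\Omega |\nabla_{\mathbb{G}}u|^{q-2}\nabla_{\mathbb{G}}u\cdot\nabla_{\mathbb{G}}v\,dx-\int_\Omega f(u)v\,dx .
\]
Continuity on the finite-dimensional space follows from dominated convergence, since all norms on $W_m$ are equivalent and elements of $W_m$ are bounded.

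The key step is the a priori estimate on a sphere $\|u\|=r$ with $r$ independent of $m$. Here Hölder's inequality gives
\[
\int_\Omega f(u)u\le \lambda C\|u\|^{p+1}+\Big(\int_\Omega \exp\big(s\alpha |u|^{Q'}\big)\Big)^{1/s}\|u\|_{L^{s'}(\Omega)} .
\]
The second factor must be controlled uniformly in $m$, and this is where the Trudinger–Moser input \eqref{TM} (or the Lorentz-space version proved in the paper) enters. Write $|u|^{Q'}=\|u\|_{W^{1,Q}}^{Q'}\,(|u|/\|u\|_{W^{1,Q}})^{Q'}$. For $q\ge Q$, use $W_0^{1,q}(\Omega)\hookrightarrow W_0^{1,Q}(\Omega)$, which holds because $\Omega$ is bounded. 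Then choosing $s\alpha C r^{Q'}\le\alpha_Q$ makes the exponential integral at most $c_0|\Omega|$.

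For $q<Q$ this route fails, since $W_0^{1,q}$ does not embed into an exponential class. I expect this to be the main obstacle. My first attempt would be to place $\alpha$ and $r$ inside the Lorentz-type Trudinger–Moser inequality of the paper, reading $\alpha^*$ as depending on $q$ and $\Omega$. Failing that, I would truncate the exponential at level $k$, prove the scheme for the truncated problem, and then obtain a uniform $L^\infty$ bound via Moser iteration for the $q$-sub-Laplacian, using the Sobolev inequality \eqref{Sov}, so that the truncation becomes inactive.

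Granting the estimate, we get
\[
\langle F_m(u),u\rangle\ge r^q-C_1\lambda r^{p+1}-C_2 r .
\]
The $C_2 r$ term is the awkward one, because $\exp(0)=1$ contributes a linear term. For small $r$ it would dominate $r^q$ when $q>1$. So $r$ must instead be taken large enough that $r^q\ge 2C_2r$, and then $\alpha$ is chosen small, $\alpha<\alpha^*(r)$, so that the Trudinger–Moser constraint still holds at that radius. With $r$ fixed this way, take $\lambda$ small so that $C_1\lambda r^{p+1}\le r^q/4$; this defines $\lambda^*$. The fixed point lemma then yields $u_m\in W_m$ with $\|u_m\|\le r$ and $F_m(u_m)=0$.

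For the passage to the limit, take a subsequence with $u_m\rightharpoonup u$ in $W_0^{1,q}$ and $u_m\to u$ in $L^t$ and almost everywhere. The compactness of this embedding comes from the Folland–Stein theory on bounded domains. The nonlinear terms converge in $L^1$ by Vitali's theorem, since the exponentials are uniformly bounded in $L^s$ with $s>1$. The harder term is the operator. Test with $u_m-u$, approximated in $W_m$, and use the $(S_+)$ property of $-\mathcal{L}_q$, namely the monotonicity inequalities for $|\xi|^{q-2}\xi$. This gives strong convergence of $\nabla_{\mathbb{G}}u_m$, hence $u$ is a weak solution with nonlinearity $f$.

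Finally, $-\mathcal{L}_q u=f(u)\ge 1>0$. Testing with $u^-$ gives $u\ge 0$ and $u\not\equiv0$. Strict positivity in $\Omega$ then follows from the strong maximum principle / Harnack inequality for the $q$-sub-Laplacian. Since $u^+=u$, the truncation disappears and $u$ solves \eqref{prob:inicial}.
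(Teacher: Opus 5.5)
For $q\ge Q$ your argument is the paper's Galerkin--Brouwer scheme, and it is correct. The ingredients match: truncated nonlinearity, a sphere of radius $r$ chosen large, then $\alpha$ small so the Trudinger--Moser constraint holds at that radius, then $\lambda$ small, $(S_+)$ for the limit, and testing with $u^-$ plus a maximum principle. Replacing the paper's Lorentz inequality (Lemma~\ref{lem4}) by the classical \eqref{TM}, via $\|\nabla_{\mathbb{G}}u\|_{L^Q}\le|\Omega|^{\frac1Q-\frac1q}\|\nabla_{\mathbb{G}}u\|_{L^q}$, is legitimate and simpler.

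For $1<q<Q$, however, there is a genuine gap. ``Granting the estimate'' grants an $m$-uniform bound on $\int_\Omega e^{s\alpha|u|^{Q'}}\,dx$ over $\{u\in W_m:\|\nabla_{\mathbb{G}}u\|_{L^q}=r\}$, and no such bound exists for any $\alpha>0$. A $W_0^{1,q}$-bound with $q<Q$ gives no exponential integrability: cut-off power singularities of order $0<\gamma<\frac{Q-q}{q}$ lie in $W_0^{1,q}$. On $W_m$ the norms are equivalent only with $m$-dependent constants. Your first attempt cannot supply this bound, and the paper's proof does not either. Its lower bound \eqref{eq.new} uses $\|\nabla_{\mathbb{G}}v\|_{L(Q,s)}\lesssim\|\nabla_{\mathbb{G}}v\|_{L^q}$, i.e.\ \eqref{eqqn} with $\tau=q$. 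But \eqref{eqqn} is valid only for $\tau>Q$, since its proof needs $\int_0^{|\Omega|}t^{\tau'(\frac1Q-1)}\,dt<\infty$. So for $q<Q$ the only viable line in your proposal is the one-sentence fallback.

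That fallback does work, in fact for every $q\in(1,\infty)$, but you have to state its content. What it needs is not a bound uniform in $k$.

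Put $f_k(t):=f(\min\{t,k\})$, so $1\le f_k\le M_k:=\lambda k^p+e^{\alpha k^{Q'}}$. The Galerkin step then needs no Trudinger--Moser inequality at all. By Poincar\'e, $\langle F_m(u),u\rangle\ge\|u\|^q-CM_k\|u\|$, so any $r$ with $r^{q-1}\ge CM_k$ works, and the limit passage is dominated convergence plus $(S_+)$.

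Testing with $u^-$ gives $u\ge 0$. Let $e\in W_0^{1,q}(\Omega)$ solve $-\mathcal{L}_q e=1$; it is bounded by Stampacchia or Moser iteration based on \eqref{Sov}. Comparing $u$ with $M_k^{1/(q-1)}e$ (test with $(u-M_k^{1/(q-1)}e)^+$) gives $0\le u\le M_k^{1/(q-1)}\|e\|_{L^\infty}$, with $\|e\|_{L^\infty}$ independent of $k,\alpha,\lambda$.

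Now fix $k:=3^{1/(q-1)}\|e\|_{L^\infty}$ first, and set $\alpha^*:=(\ln 2)\,k^{-Q'}$ and $\lambda^*:=k^{-p}$. For $\alpha<\alpha^*$ and $\lambda<\lambda^*$ one gets $M_k<3$, hence $u\le k$. Then $f_k(u)=f(u)$ and $u$ solves the original problem, with strict positivity from the Harnack inequality as you planned.

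The ordering ($k$ first, then $\alpha$ and $\lambda$) is the missing piece. With it, the Trudinger--Moser step becomes optional even for $q\ge Q$, and the hypothesis $p<q-1$ is never used.
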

Now, we briefly discuss our result. 
\begin{remark}
    In order to briefly discuss our main result we recall that in the case of the Heisenberg group, and for $q=Q,$ Theorem \ref{T.Gb} has been obtained in \cite{Tordecilla:Aguila:2025}. For other values of $q$ our result is already new on the Heisenberg group.  Our theorem also recovers the corresponding result for the Euclidean space. We refer the reader to the introduction in \cite{Tordecilla:Aguila:2025} for a discussion about these cases. Theorem \ref{T.Gb} consider the case where $Q\geq 2.$ Nevertheless, the cases $Q=1,2$ can be
avoided since they do not produce a purely stratified Lie group. 
\end{remark}
 \begin{figure}[ht]
    \centering
    \includegraphics[width=0.65\textwidth]{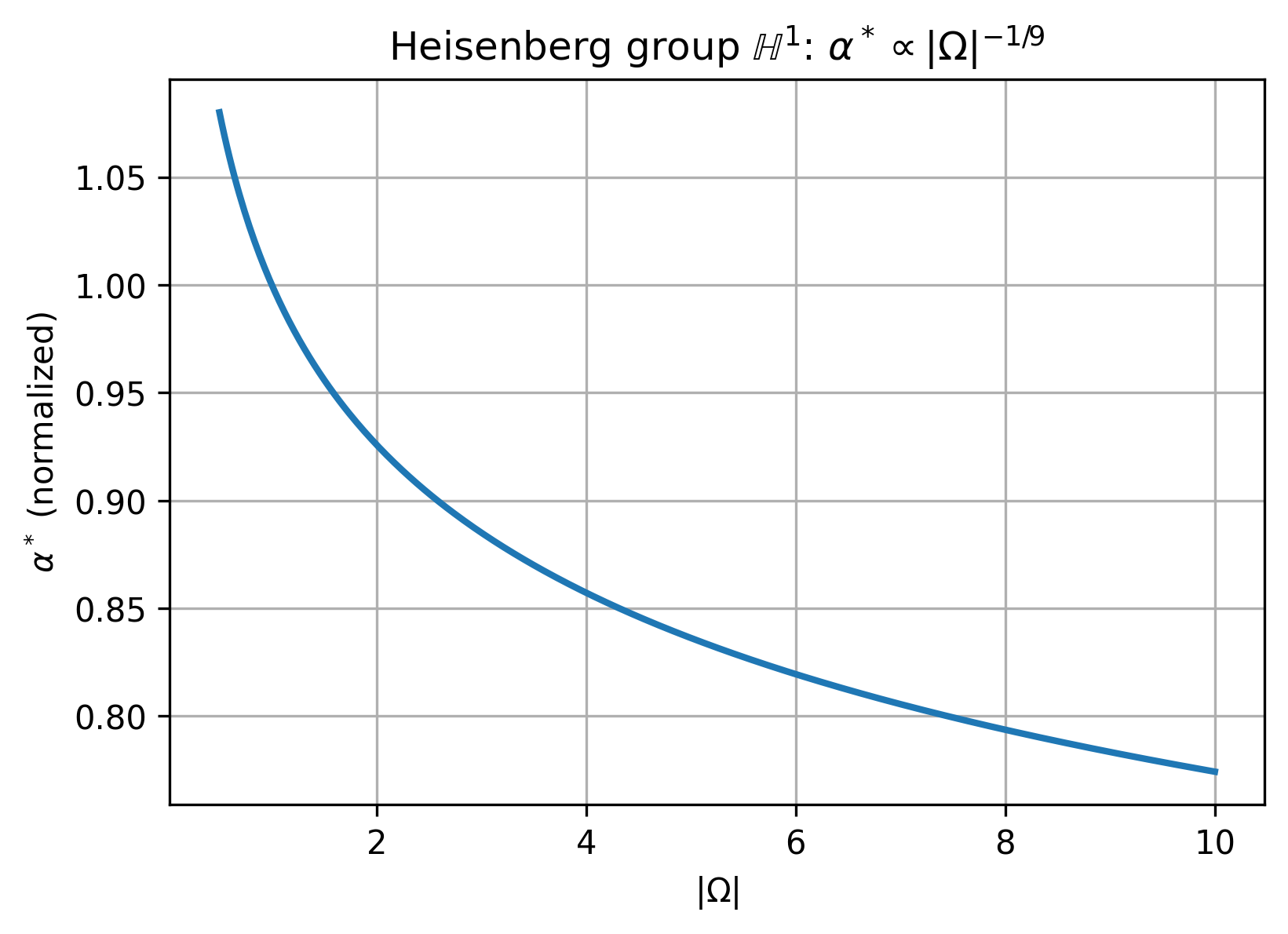}
    \caption{Dependence of the normalised threshold $\alpha^*$ on the measure of the domain $|\Omega|$ for the Heisenberg group ($Q=4$). The relationship follows the power law $\alpha^*\asymp |\Omega|^{-1/9}$, illustrating the gradual decrease of the threshold as the domain size increases.}
    \label{fig:alpha-heisenberg}
\end{figure} 
\begin{remark} We remark that the parameter $\alpha^*$ in Theorem \ref{T.Gb} encodes the size of the domain $\Omega.$ In the particular case of the Heisenberg group \(\mathbb{H}^1\) whose homogeneous
dimension is \(Q=4\),  see Figure \ref{fig:alpha-heisenberg}, the threshold satisfies
\[
\alpha^* \asymp |\Omega|^{-1/9}.
\] This analysis comes from the method of our proof and the definition of $\alpha^*$ in \eqref{alpa} leaving other constants ``normalised". We call to this new parameter, the normalised parameter $\alpha^*.$  
Hence, the dependence of \(\alpha^*\) on the measure of the domain is relatively
mild, showing that even significant changes in \(|\Omega|\) produce only moderate
variations in the admissible threshold. A similar analysis can be made for the parameter $\lambda^*$ in Theorem  \ref{T.Gb}.
\end{remark}
\begin{figure}[ht]
    \centering
    \includegraphics[width=0.65\textwidth]{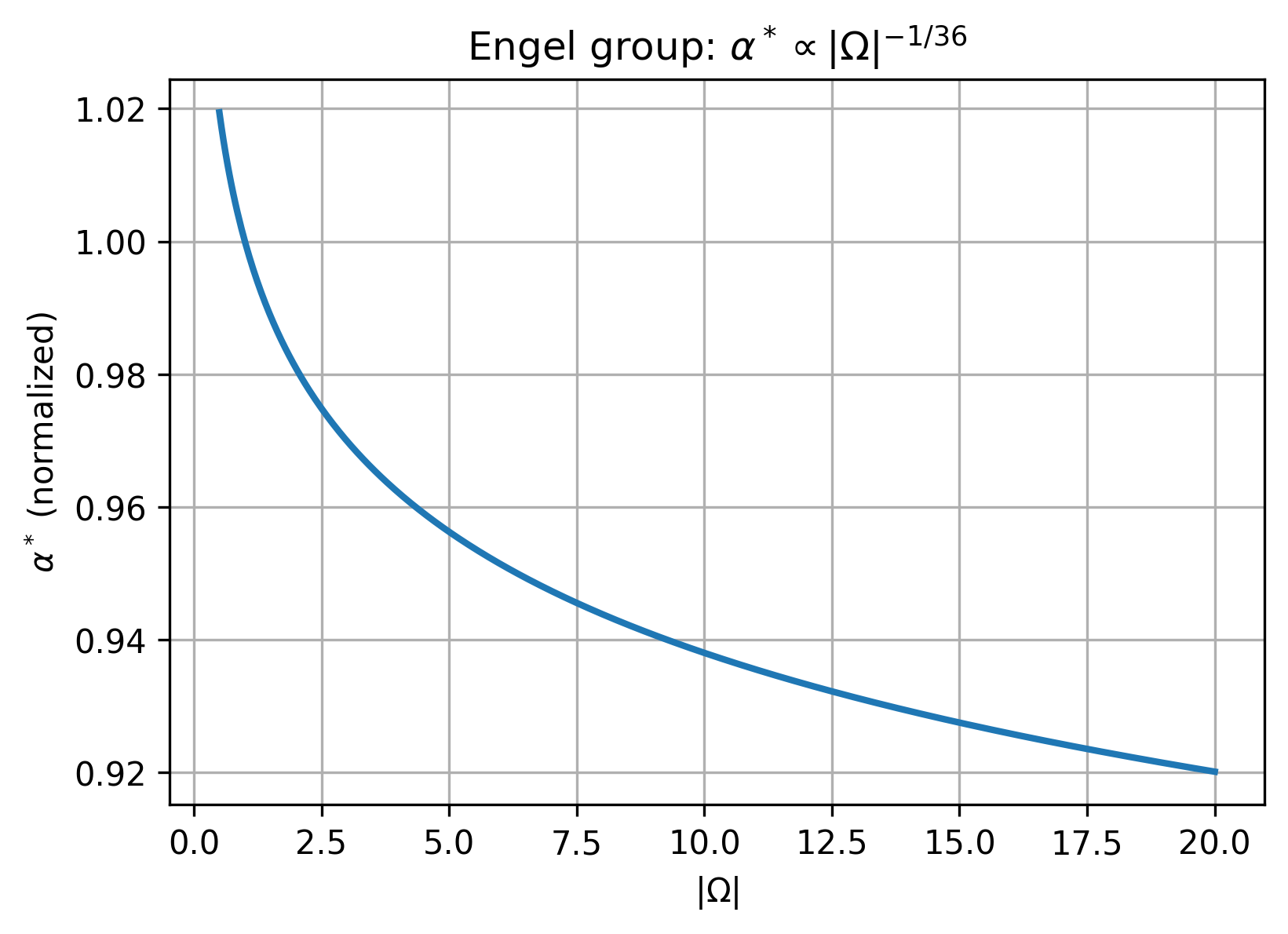}
    \caption{Normalised threshold $\alpha^*$ as a function of the domain measure $|\Omega|$ for the Engel group ($Q=7$). The dependence follows the estimate $\alpha^*\asymp |\Omega|^{-1/36}$.}
    \label{fig:alpha-engel}
\end{figure}
\begin{remark}\label{css}
As in the case of the Euclidean space, note that $\alpha <\alpha_Q$, $\alpha = \alpha_Q$ and $\alpha > \alpha_Q$ represent the subcritical, critical, and supercritical  growth of $\text{exp}(\alpha |u|^{\frac{Q}{Q-1}})$ as $|u|\to +\infty$, respectively. In our further analysis, an extension of the Trudinger Moser inequality \eqref{TM}  to Lorentz spaces is required, see Lemma \ref{lem4}.
\end{remark}
\begin{remark}
    Note that a similar analysis for the parameters $\alpha^*$ and $\lambda^*$ in Theorem  \ref{T.Gb}  can be also done, as in Figure \ref{fig:alpha-heisenberg}, for example:
    
    $ \bullet$ in the case of the Engel group $\mathbb{E}$ of homogeneous dimension then Q=7. 
Therefore, from \eqref{alpa}, one has that (see Figure \ref{fig:alpha-engel}),
\[
\alpha^*
\asymp
|\Omega|^{-1/36}.
\]
Hence, the threshold $\alpha^*$ decreases monotonically with the measure of the
domain, although, clearly, at a significantly slower rate than in the case of the Heisenberg group. 

    $\bullet$ For more general Heisenberg-type groups, as the quaternionic Heisenberg group $\mathbb{H}^n_q$ of homogeneous dimension $Q=4n+3.$ Note that the case of $\mathbb{H}^2_q$ and of $\mathbb{H}^3_q$ and  gives $Q=11$ and $Q=15,$ respectively.
    
    $\bullet$ For general free nilpotent Lie groups arising from free nilpotent Lie algebras; see, e.g., \cite{Le:Donne}. In the latter case, a free nilpotent Lie group $\mathbb{G}_{s,m}$ with Lie algebra with $m$ generators and of step $s,$ has dimension given by $Q=\sum_{j=1}^s\sum_{j|d}\mu(d)j^{m/d},$ where $\mu(\cdot)$ denotes the M\"obius function. 
\end{remark}
\begin{remark}
     In all these cases, on an arbitrary stratified Lie group $\mathbb{G}$ of homogeneous dimension $Q,$ the behavior expected for the geometric constant $\alpha^*$ is $\alpha^*\asymp |\Omega|^{-\frac{1}{(Q-1)^2}},$ and for the spectral parameter $\lambda^*$ is $\lambda^*\asymp |\Omega|^{-1}.$ This discussion in any case is heuristic since in the definition of these parameters other constants appear but by fixing them the numerical analysis of these constants can be improved.  In particular, when two domains $\Omega$ and $\Omega'$ have the same volume 
$|\Omega|=|\Omega'|$, one expects their associated geometric and spectral 
parameters $\alpha^*=\alpha^*(\Omega),$  $\alpha^{\bullet}=\alpha^*(\Omega'),$ and $\lambda^*=\lambda^*(\Omega),$ $\lambda^{\bullet}=\lambda^*(\Omega'),$  to be comparable, respectively.
\end{remark}
    For further developments in the analysis of hypoelliptic problems on graded Lie groups, which constitute a broader class than stratified Lie groups, we refer the reader to \cite{Cardona:Delgado:Ruzhansky:2021,Cardona:Ruzhansky:2024,Cardona:Kumar:Ruzhansky:2025,Cardona:Delgado:Ruzhansky:2022}. These works develop a comprehensive framework for harmonic analysis, pseudo--differential operators, spectral multipliers, function spaces, and evolution equations associated with hypoelliptic operators on graded Lie groups.

\subsubsection{Organisation of the paper}
The preliminaries required for this manuscript are presented in the next section. More precisely, the paper is organised as follows.

$\bullet$ In Section \ref{sec1}, we introduce the notions related to stratified Lie groups that are used throughout the paper. In particular, we present the function spaces adapted to the Hörmander sub-Laplacian on these groups.

$\bullet$ Section \ref{Sec3CT} is divided in two parts. In Subsection \ref{Sub:31}, we prove a version of the Trudinger--Moser inequality tailored to our subsequent analysis, namely, a version adapted to Lorentz spaces. Finally, in Subsection \ref{sec3}, we prove our main result, Theorem \ref{T.Gb}.

\section{Preliminaries and auxiliary results}\label{sec1}

\subsection{Stratified Lie groups}

In this section, we present the notion of {\it stratified Lie group} and we present some auxiliary results that will be used throughout the paper. 
\subsubsection{Preliminaries} We start by presenting the notion of homogeneous Lie group and of its homogeneous dimension. 
\begin{defn}
A Lie group $\mathbb{G}$ (on $\mathbb{R}^n$) is said to be
\emph{homogeneous} if, for each $\lambda>0$, there exists an automorphism $D_\lambda:\mathbb{G}\to\mathbb{G}$, defined by $$D_\lambda(x)=
(\lambda^{r_1}x_1,\lambda^{r_2}x_2,\ldots,\lambda^{r_N}x_N),$$ for $r_i>0$, $\forall\, i=1,2,\ldots,N$. The map $D_\lambda$ is called a
\emph{dilation} on $\mathbb{G}$.
\end{defn}

For simplicity, we sometimes prefer to use the notation $\lambda x$ to denote the dilation $D_\lambda x$. Note that, if $\lambda x$ is a
dilation on $\mathbb{G}$ then $\lambda^r x$ is also a dilation on the group. 

The number $$Q=r_1+r_2+\cdots+r_N,$$ is called the \emph{homogeneous dimension} of the homogeneous Lie group
$\mathbb{G}$ and the natural number $n$ represents the topological
dimension of $\mathbb{G}$. 

The Haar measure on $\mathbb{G}$ is denoted by $dx$ and it is nothing
but the usual  measure induced by the Lebesgue measure on $\mathbb{R}^N$.

\begin{defn}[Stratified Lie groups]
A Lie group $\mathbb{G}=(\mathbb{R}^n,\circ)$ is called a \emph{stratified group} (or a \emph{homogeneous Carnot group}) if it satisfies the following conditions:

\begin{enumerate}
\item[(a)] For some natural numbers $N=N_1, N_2,\cdots,N_r,$ the decomposition
\[
\mathbb{R}^n=\mathbb{R}^N\times\cdots\times\mathbb{R}^{N_r},
\]
is valid, and then $n=N+N_2+\cdots, N_r$, and for every $\lambda>0$ the dilation
\[
\delta_\lambda:\mathbb{R}^n\to\mathbb{R}^n
\]
given by
\[
\delta_\lambda(x)\equiv
\delta_\lambda(x',x^{(2)},\ldots,x^{(r)})
:=
(\lambda x',\lambda^2x^{(2)},\ldots,\lambda^r x^{(r)}),
\]
is an automorphism on the group $\mathbb{G}$. Here, we denote
\[
x'=x^{(1)}\in\mathbb{R}^N
\quad \text{and} \quad
x^{(k)}\in\mathbb{R}^{N_k},
\qquad k=2,\ldots,r.
\]

\item[(b)] Let $N$ be as in (a) and let $X_1,\ldots,X_N$, be left invariant vector fields on $\mathbb{G}$ such that
\[
X_k(0)=\left.\frac{\partial}{\partial x_k}\right|_{0},
\qquad k=1,\ldots,N,
\]
and satisfying the rank condition
\[
\operatorname{rank}\big(\mathrm{Lie}\{X_1,\ldots,X_N\}\big)=n,
\]
this means that the iterated commutators of
$X_1,\ldots,X_N,$ span the Lie algebra of $\mathbb{G}$.
\end{enumerate}
\end{defn}

The number $r$ is called the \emph{step} of $\mathbb{G}$, and the left invariant vector fields
$X_1,\ldots,X_N$, are called  Hörmander vector fields generating the Lie algebra of $\mathbb{G}$. Note that the homogeneous dimension
of a stratified Lie group $\mathbb{G}$ is also given by $${Q=\sum_{k=1}^{r} kN_k,\quad N_1=N}.$$ 
The second-order differential operator
\begin{equation}\label{defl}
\mathcal{L}=\sum_{k=1}^{N} X_k^2,
\end{equation}
is called the Hörmander sub-Laplacian on $\mathbb{G}$. The sub-Laplacian
$\mathcal{L}$ is a left invariant homogeneous hypoelliptic differential operator
and it is elliptic if and only if the step of $\mathbb{G}$ is equal to $1$, or in other words if $\mathbb{G}=\mathbb{R}^N$ as a group.

The \emph{hypoellipticity} of $\mathcal{L}$ means that for a distribution
$f\in \mathcal{D}'(\Omega)$ in any open set $\Omega$, if
$\mathcal{L}f\in C^\infty(\Omega)$ then $f\in C^\infty(\Omega)$.
It is a special case of Hörmander’s sum of squares theorem \cite{Hormander}.

The left invariant vector field $X_k$ has an explicit form given in Proposition~1.2.19, namely,
\begin{equation}\label{campo}
X_k
=
\frac{\partial}{\partial x_k'}
+
\sum_{l=2}^{r}\sum_{m=1}^{N_l}
a_{k,m}^{(l)}
\bigl(x',\ldots,x^{(l-1)}\bigr)
\frac{\partial}{\partial x_m^{(l)}},
\end{equation}
where $a_{k,m}^{(l)}$ is a homogeneous (with respect to $\delta_\lambda$)
polynomial function of degree $l-1$. 

We will also use the following notation
for the \emph{horizontal gradient}
\[
\nabla_\mathbb{G} = (X_1,\ldots,X_N),
\]
for the \emph{horizontal divergence}
\[
\operatorname{div}_\mathbb{G} v := \nabla_\mathbb{G} \cdot v\equiv \sum_{j=1}^NX_jv_j,\,\,v:=(v_j)_{1\leq j\leq N}\in C^\infty(\mathbb{G},\mathbb{C}^N),
\]
and for the \emph{horizontal $q$-Laplacian} (or $q$-sub-Laplacian)
\[
\mathcal{L}_q (\cdot)
=
\operatorname{div}_\mathbb{G}
\bigl(
|\nabla_\mathbb{G} (\cdot)|^{q-2}\nabla_\mathbb{G} (\cdot)
\bigr),
\qquad
1<q<\infty.
\]

\begin{remark}
Denoting the Euclidean distance by
\[
|x'|
=
\sqrt{(x_1')^2+\cdots+(x_N')^2}.
\]
for the Euclidean norm on $\mathbb{R}^N$, the representation~\eqref{campo}
for derivatives leads to the identities
\[
|\nabla_\mathbb{G} |x'|^\gamma|
=
\gamma |x'|^{\gamma-1},
\]
and
\[
\operatorname{div}_H
\left(
\frac{x'}{|x'|^\gamma}
\right)
=
\frac{
\sum_{j=1}^{N}|x'|^\gamma X_jx_j'
-
\sum_{j=1}^{N}x_j'\gamma |x'|^{\gamma-1}X_j|x'|
}{
|x'|^{2\gamma}
}
=
\frac{N-\gamma}{|x'|^\gamma},
\]
for all $\gamma\in\mathbb{R}$, $|x'|\neq 0$.
\end{remark}

An absolutely continuous curve $\gamma : [0,1] \to \mathbb{R}$, is said to be admissible, if there exist functions $c_i : [0,1] \to \mathbb{R}$, for $i = 1,2,\ldots,n$, such that
\[
\dot{\gamma}(t)=\sum_{i=1}^{n} c_i(t)X_i(\gamma(t)),
\quad \text{and} \quad
\sum_{i=1}^{n} c_i(t)^2 \leq 1.
\]

Observe that the functions $c_i$ may not be unique as the vector fields $X_i$ may not be linearly independent. For any $x,y \in \mathbb{G}$ the Carnot--Carathéodory distance is defined as
\begin{align*}
\rho_{cc}(x,y)
&=
\inf \left\{
l>0 :
\exists
\gamma : [0,l] \to \mathbb{G} \text{ admissible,}\,\,
\gamma(0)=x,
\gamma(l)=y
\right\}.
\end{align*}

We define $\rho_{cc}(x,y)=0$, if such curve does not exists. Note that $\rho_{cc}$ is not a metric in general but the Hörmander condition for the vector fields
$X_1,X_2,\ldots,X_N,$
ensures that $\rho_{cc}$ is a metric (see \cite{Hormander}). The space $(\mathbb{G},\rho_{cc})$ is known as a Carnot--Carathéodory space.

It was shown by Folland \cite{Follan1} that the sub-Laplacian
$\mathcal{L}$ in~\eqref{defl} on a general stratified group $\mathbb{G}$
has a unique \emph{fundamental solution} $\Gamma$, that is,
\[
\mathcal{L}\Gamma=\delta,
\]
where $\delta$ is the delta-distribution at the unit element of
$\mathbb{G},$ and with the function $\Gamma$ being homogeneous
of degree $2-Q$.

The function
\[
d(x):=
\begin{cases}
\Gamma(x)^{\frac{1}{2-Q}},
& \text{for } x\neq 0,
\\
0,
& \text{for } x=0,
\end{cases}
\]
is called the $\mathcal{L}$-gauge on $\mathbb{G}$. It is a
\emph{homogeneous quasi-norm} on $\mathbb{G}$, that is, it is a
continuous function
\[
d:\mathbb{G}\to [0,\infty),
\]
smooth away from the origin, which satisfies the conditions
\begin{enumerate}
    \item[i)] $d(x)=0$ if only if $x=0$;
    \item[ii)] $d(x^{-1})=d(x)$ for all $x\in \mathbb{G}$;
    \item[iii)]  $d(\beta x)=\beta d(x)$ for all $x\in \mathbb{G}$ and $\beta>0$. 
\end{enumerate}

We refer to the original paper \cite{Follan1} by Folland as well as to
a recent presentation in \cite[Section 3.2.7]{Ruzhansky:Fischer:Book} for further details
and properties of these fundamental solutions. Moreover, for the boundedness properties for the parametrix of the sub-Laplacian and other pseudo-differential operators on stratified Lie groups and other Lie groups, we refer the reader to \cite{Cardona:Delgado:Ruzhansky:2021}.

Finally, we record some facts about the action of dilations on the measure of measurable sets.
Let $\Omega$ be a Haar measurable subset of $\mathbb{G}$. Then
\[
\nu(D_\beta(\Omega))=\beta^Q \nu(\Omega)
\]
where $\nu(\Omega)$ is the Haar measure of $\Omega$. The quasi--ball of radius $r$ centered at $x \in \mathbb{G}$ with respect to the quasi--norm $d(\cdot)$ is defined as
\begin{equation*}
B(x,r)=\left\{ y\in \mathbb{G} : \left| y^{-1}\circ x \right| < r \right\}.
\end{equation*}

Observe that $B(x,r)$ can be obtained by the left--translation by $x$ of the ball $B(0,r)$. Furthermore, $B(0,r)$ is the image under the dilation $D_r$ of $B(0,1)$. Thus, we have
\[
\nu(B(x,r)) = r^Q
\]
for all $x \in \mathbb{G}$. For a measurable set $A\subset \mathbb{G},$ we note by $\nu(A)=|A|$ its Haar measure.

\subsection{Extended sub--Laplacians}

In general, most of the results described in this paper in the setting
of stratified groups can be extended to any second-order hypoelliptic
differential operators which are ``equivalent'' to the sub-Laplacian
$\mathcal{L}$. Let us very briefly discuss this matter in the spirit
of \cite{Bonfiglioli} and the reason making natural this extension. Indeed, let
\[
A=(a_{k,j})_{1\le k,j\le N_1}
\]
be a positive-definite symmetric matrix. Consider the following
second-order hypoelliptic differential operator based on the matrix
$A$, and the vector fields $\{X_1,\ldots,X_{N_1}\}$ from the first
stratum, given by
\[
\mathcal{L}_A
=
\sum_{k,j=1}^{N_1}
a_{k,j}X_kX_j.
\]
For instance, in the Euclidean case, that is, for
$\mathbb{G}=(\mathbb{R}^N,+)$ and $N_1=N$, the constant coefficients
second-order elliptic operator
\[
\Delta_A
=
\sum_{k,j=1}^{N}
a_{k,j}
\frac{\partial^2}{\partial x_k\partial x_j}
\]
is transformed into the Laplacian
\[
\Delta
=
\sum_{k=1}^{N}
\frac{\partial^2}{\partial x_k^2}
\]
under a linear change of coordinates in $\mathbb{R}^N$. Thus, the
operator $\Delta_A$ is ``equivalent'' to the operator $\Delta$ by a
linear change of the coordinate system.

In general, to apply the above argument to transform $\mathcal{L}_A$
to the sub-Laplacian $\mathcal{L}$ it is not enough to change the basis
by a linear transformation. However, it is enough in the setting of
free stratified groups. We say that a stratified group $\mathbb{G}$
is a \emph{free stratified group} if its Lie algebra is (isomorphic to)
a free Lie algebra. For instance, the Heisenberg group $\mathbb{H}^1$
is a free stratified group. In this case we have the following result.

\begin{thm}
Let $\mathbb{G}$ be a free stratified group and let $A$ be a given
positive-definite symmetric matrix. Let
\[
X=\{X_1,\ldots,X_{N_1}\}
\]
be left invariant vector fields in the first stratum of the Lie algebra
of $\mathbb{G}$. Let
\[
Y_k
:=
\sum_{j=1}^{N_1}
\left(A^{\frac12}\right)_{k,j}X_j,
\qquad
k=1,\ldots,N_1.
\]

Consider the associated second-order differential operator
\[
\mathcal{L}_A
=
\sum_{k=1}^{N_1}Y_k^2
=
\sum_{k,j=1}^{N_1}
a_{k,j}X_kX_j.
\]

Then there exists a Lie group automorphism $T_A$ of $\mathbb{G}$ such that
\[
Y_k(u\circ T_A)
=
(X_ku)\circ T_A,
\qquad
k=1,\ldots,N_1,
\]
and
\[
\mathcal{L}_A(u\circ T_A)
=
(\mathcal{L}u)\circ T_A,
\]
for every smooth function
\[
u:\mathbb{G}\to\mathbb{R}.
\]
Moreover, $T_A$ has polynomial component functions and commutes with the dilations of $\mathbb{G}$.
\end{thm}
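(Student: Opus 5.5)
The plan is to build $T_A$ from the universal property of free nilpotent Lie algebras and then pass to the group through the exponential map. The Lie algebra $\mathfrak g$ of $\mathbb{G}$ is free nilpotent of step $r$ on the generators $X_1,\ldots,X_{N_1}$ of the first stratum $V_1$. Since $A^{1/2}$ is invertible, the vectors $Y_1,\ldots,Y_{N_1}$ are again a basis of $V_1$. The linear map
\[
\ell_A:V_1\to V_1,\qquad \ell_A(X_k)=Y_k,
\]
is therefore a linear isomorphism. By freeness (the universal property of the free nilpotent Lie algebra of step $r$), $\ell_A$ extends uniquely to a Lie algebra homomorphism $\Phi_A:\mathfrak g\to\mathfrak g$.

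Next I check that $\Phi_A$ is an automorphism and commutes with dilations. The inverse linear map $Y_k\mapsto X_k$ also extends to a homomorphism $\Psi_A$. The composition $\Psi_A\circ\Phi_A$ fixes the generators, so by uniqueness in the universal property it is the identity; the same holds for $\Phi_A\circ\Psi_A$. Moreover, $\Phi_A$ maps an $k$-fold bracket of elements of $V_1$ to a $k$-fold bracket of elements of $V_1$. Hence it preserves every stratum $V_k=[V_1,V_{k-1}]$, and so it commutes with the algebra dilations $\delta_\lambda|_{V_k}=\lambda^k$.

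Since $\mathbb{G}$ is simply connected nilpotent, $\exp$ is a global diffeomorphism, and in exponential coordinates the group law is polynomial by the Baker--Campbell--Hausdorff formula. I then set
\[
T_A:=\exp\circ\Phi_A^{-1}\circ\exp^{-1}\quad\text{or}\quad T_A:=\exp\circ\Phi_A\circ\exp^{-1},
\]
choosing the direction by the computation below. Because BCH involves only brackets, $T_A$ is a group automorphism. Its components are polynomial: they are linear in exponential coordinates, and hence polynomial in the given coordinates, since the identification $\mathbb{G}=(\mathbb{R}^n,\circ)$ differs from exponential coordinates by a polynomial diffeomorphism. It also commutes with the group dilations, because $\Phi_A$ does.

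The identity for the vector fields follows from the chain rule. Since $X$ is left invariant, $(Xf)(x)=\frac{d}{dt}f(x\exp(tX))|_{t=0}$. Take $T=\exp\circ\Phi\circ\exp^{-1}$ with $\Phi(Y_k)=X_k$, that is, $\Phi=\Phi_A^{-1}$. Then $T(x\exp(tY_k))=T(x)\exp(tX_k)$, which gives
\[
Y_k(u\circ T)(x)=\frac{d}{dt}\,u\bigl(T(x)\exp(tX_k)\bigr)\Big|_{t=0}=(X_ku)(T(x)).
\]
So the correct choice is $T_A=\exp\circ\Phi_A^{-1}\circ\exp^{-1}$, which is still a polynomial, dilation-commuting automorphism. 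Applying this identity twice gives $Y_k^2(u\circ T_A)=(X_k^2u)\circ T_A$, and summing over $k$ yields $\mathcal{L}_A(u\circ T_A)=(\mathcal{L}u)\circ T_A$. Finally,
\[
\sum_k Y_k^2=\sum_{k,j,l}(A^{1/2})_{kj}(A^{1/2})_{kl}X_jX_l=\sum_{j,l}a_{jl}X_jX_l,
\]
using the symmetry of $A^{1/2}$.

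I expect the main obstacle to be the freeness step, namely justifying that $\ell_A$ extends to a homomorphism that preserves the stratification. This is exactly where the free hypothesis is needed: for non-free groups such as $\mathbb{H}^n$ with $n\geq 2$, an arbitrary linear map of $V_1$ need not respect the relations among brackets. I would invoke the universal property of free nilpotent Lie algebras, as in \cite{Bonfiglioli}.
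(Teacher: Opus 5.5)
The paper does not prove this theorem; it quotes it from \cite{Bonfiglioli}, so there is no internal proof to compare against. Your argument is correct and is the standard one. You extend $X_k\mapsto Y_k$ by the universal property to a layer-preserving automorphism $\Phi_A$ of $\mathfrak g$, lift $\Phi_A^{-1}$ to $\mathbb{G}$ through $\exp$, and read off $Y_k(u\circ T_A)=(X_ku)\circ T_A$ from $T_A(x\exp(tY_k))=T_A(x)\exp(tX_k)$. Your choice of direction is right. So is the identity $\sum_k Y_k^2=\sum_{j,l}a_{jl}X_jX_l$, which uses the symmetry of $A^{1/2}$.

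Three implicit steps deserve a line each in a write-up.

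First, the paper's definition of ``free'' is loose. If $\mathfrak g$ is abstractly isomorphic to a free nilpotent algebra $\mathfrak f_{m,s}$, then $m=\dim\mathfrak g/[\mathfrak g,\mathfrak g]=N_1$ and $s=r$. Hence the canonical surjection $\mathfrak f_{N_1,r}\to\mathfrak g$, $e_j\mapsto X_j$, is an isomorphism by dimension count. This is what makes $\mathfrak g$ free on the particular basis $X_1,\dots,X_{N_1}$ you use.

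Second, commuting with the group dilations uses that $d\delta_\lambda$ acts as $\lambda^k$ on $V_k$, so that $\delta_\lambda=\exp\circ\delta_\lambda\circ\exp^{-1}$.

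Third, polynomiality in the given coordinates rests on $\exp$ and its inverse being polynomial on a homogeneous Carnot group. You can avoid needing this: $T_A$ is smooth and satisfies $T_A\circ\delta_\lambda=\delta_\lambda\circ T_A$. So each component is a smooth function homogeneous of positive integer degree with respect to $\delta_\lambda$, and is therefore a polynomial.
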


\subsection{Classical Sobolev spaces on stratified Lie groups}

For $\Omega\subset \mathbb{G}$ an open subset, we consider the subelliptic Sobolev space
\[
W^{1,p}(\Omega):= W^{1,p}_{\mathcal{L}}(\Omega)
=
\left\{
u : \Omega \to \mathbb{R}
\; ; \;
u,\ |\nabla_{\mathbb{G}}u| \in L^p(\Omega)
\right\}, \quad 0<p<\infty.
\] This is a natural Sobolev space associated to the sub-Laplacian $\mathcal{L}=\sum_{k=1}^{N} X_k^2.$
Moreover, let us consider the following functional
\[
J_p(u)
:=
\left(
\int_{\Omega} |\nabla_{\mathbb{G}}u|^p \, dx
\right)^{\frac{1}{p}}.
\]
Then we define the space $W_0^{1,p}(\Omega)$ to be the completion of $C_0^1(\Omega)$ in the norm generated by $J_p$ (see, e.g. Capogna, Danielli, and Garofalo \cite{Capogna}). Note that for $1\leq p<\infty,$ the Sobolev space $W^{1,p}(\Omega)$ is a Banach space endowed with the norm
\begin{equation}
    \Vert u \Vert_{ W^{1,p}}:=  \Vert u \Vert_{ W^{1,p}_\mathcal{L}}= J_p(u).
\end{equation}Moreover, we have the equivalent of norms
\begin{equation}
    \Vert u \Vert_{ W^{1,p}}\asymp  \Vert u \Vert_{ W^{1,p}}':=\sum_{j=1}^N\|X_j u\|_{L^p(\Omega)}.
\end{equation} Note that the Sobolev space $W^{1,p}_{\mathcal{L}}(\mathbb{G})$ does not necessarily agrees with the standard Sobolev spaces $H^{1,p}(\mathbb{R}^n)$ defined by standard partial derivatives on $\mathbb{R}^n$ even considering the identification $\mathbb{G}\cong \mathbb{R}^n,$ although embedding between them are possible, we refer e.g. to the paper of the second author and Ruzhansky \cite{CRPA} for this discussion even for graded Lie groups.

\section{Proof of the main theorem}\label{Sec3CT}

\subsection{Lorentz spaces and the subelliptic Trudinger–Moser inequality}\label{Sub:31} In our further analysis we will require a suitable subelliptic (horizontal version of the) Trudinger–Moser inequality on Lorentz spaces  $L(p,q):=L^{p,q}(\mathbb{G}) $ on strafied Lie groups $\mathbb{G}$. Under the identification $\mathbb{G}\cong \mathbb{R}^n,$ where $n$ is the topological dimension of $\mathbb{G},$ these spaces can be defined in a similar way to the one settled for the Euclidean space. We recall this construction in the next subsection.  
\subsubsection{Auxiliary results}
We recall the definition of the Lorentz space on the stratified group $\mathbb{G}$, see e.g. \cite{Follan1}. For a measurable function
$\phi : \Omega \to \mathbb{R}$, we denote by
\[
\mu_{\phi}(t)=\big|\{x\in\Omega:|\phi(x)|>t\}\big|,
\qquad \text{for } t\geq 0,
\]
its distribution function, where $|\Omega|$ denotes the Haar measure of $\Omega\subset \mathbb{G}$. The decreasing rearrangement $\phi^{*}$ of
$\phi$ is defined by
\[
\phi^{*}(\beta)=\sup\{t>0:\mu_{\phi}(t)>\beta\},
\qquad 0\leq \beta\leq |\Omega|.
\]
The Lorentz space $L(\tau,q)$ is defined by

\begin{align*}
L(\tau,q)
&=
\{
\phi:\Omega\to\mathbb{R}
\ \text{measurable}:
\ 1<\tau<\infty,\; 1\leq q<\infty,\\
&
\left(
\int_{0}^{|\Omega|}
\frac{1}{t}
\big[\phi^{*}(t)t^{1/\tau}\big]^q
\,dt
\right)^{1/q}
<\infty\},
\end{align*}
endowed with norm
\[
\|\phi\|_{L(\tau,q)}
=
\left(
\int_{0}^{|\Omega|}
\frac{1}{t}
\big[\phi^{*}(t)t^{1/\tau}\big]^q
\,dt
\right)^{1/q}.
\]
We recall in the sequel a few properties of the Lorentz spaces according
to \cite{Adan, Tartar} on the Euclidean space. We have the identity $L(\tau,\tau)=L^{\tau}$ for all $1<\tau<\infty$, and $\|\phi\|_{L(\tau,\tau)}=\|\phi\|_{L^{\tau}}$. Note that, when $\tau=\infty$ and $1<s<\infty$, the Lorentz space $L(\infty,s)$ is given by $$L(\infty,s):=\{\phi:\Omega\to\mathbb{R}
\ \text{measurable}:\|\phi\|_{L(\infty,s)}<\infty\},$$ where 
\[
\|\phi\|_{L(\infty,s)}:=\left(\int_0^{|\Omega|}
\left(
\phi^{**}(t)-\phi^*(t)
\right)^s
\frac{dt}{t}\right)^{\frac{1}{s}}.
\]
The following continuous embedding 
\begin{equation}\label{eq18}
L^{\tau}\hookrightarrow L(Q,1)\hookrightarrow L(Q,s)\hookrightarrow
L(Q,Q)=L^{Q}\hookrightarrow L(Q,\tau)\hookrightarrow L^{s},
\end{equation} holds for all $1<s<Q<\tau<\infty.$ 
The Hölder inequality takes the form:
\[
\left|\int_{\Omega}fg\,dx\right|
\leq
\|f\|_{L(\tau,s)}
\|g\|_{L(\tau',s')},
\]
where $\tau'=\frac{\tau}{\tau-1}$ and $s'=\frac{s}{s-1}.$ Also, by \eqref{eq18}, for all $1<s<Q<\tau<\infty$, we obtain that
\begin{equation}
\|u\|_{L(Q,1)}
\leq
D_{Q,\tau}\,
|\Omega|^{\frac{1}{Q}-\frac{1}{\tau}}
\|u\|_{L(\tau,\tau)},
\,\,
\text{where}
\,\,
D_{Q,\tau}
=
\left[
\frac{1}
{\tau'\left(\frac{1}{Q}-1\right)+1}
\right]^{\frac{1}{\tau'}}.
\tag{19}
\end{equation}

Indeed, let $\tau'=\frac{\tau}{\tau-1}$, then
\begin{align*}
\|u\|_{L(Q,1)}
&=
\int_{0}^{|\Omega|}
u^{*}(t)t^{1/Q}\frac{dt}{t}
\\[1ex]
&=
\int_{0}^{|\Omega|}
u^{*}(t)t^{1/\tau}
\,t^{\,1/Q-1/\tau-1}\,dt
\\[1ex]
&\leq
\left(
\int_{0}^{|\Omega|}
\bigl[u^{*}(t)t^{1/\tau}\bigr]^\tau
\frac{dt}{t}
\right)^{1/\tau}
\left(
\int_{0}^{|\Omega|}
t^{\,\tau'(1/Q-1)}
\,dt
\right)^{1/\tau'}
\\[1ex]
&\leq
\|u\|_{L(\tau,\tau)}
\left[
\frac{
|\Omega|^{\,\tau'(1/Q-1)+1}
}{
\tau'(1/Q-1)+1
}
\right]^{1/\tau'}
\\[1ex]
&=
D_{Q,\tau}\,
\|u\|_{L(\tau,\tau)}
\,|\Omega|^{\frac1Q-\frac1\tau}.
\end{align*}
Since the properties of Lorentz spaces (see, e.g. \cite{Oneil}) provide the estimate
\begin{equation}\label{eq20}
\|u\|_{L(Q,s)}
\leq
\left(\frac{1}{Q}\right)^{1-\frac{1}{s}}
\|u\|_{L(Q,1)},
\end{equation}
by \eqref{eq18} and \eqref{eq20}, we get
\begin{equation}\label{eqqn}
\forall u\in L^{\tau}(\Omega), \quad \|u\|_{L(Q,s)}
\leq
C_{Q,\tau,s}
\,|\Omega|^{\frac{1}{Q}-\frac{1}{\tau}}
\|u\|_{L^{\tau}(\Omega)},
\end{equation}
where $C_{Q,\tau,s}
:=
\left(\frac{1}{Q}\right)^{1-\frac{1}{s}}
D_{Q,\tau}$.

\begin{remark}
On a stratified Lie group $\mathbb{G}$ with homogeneous dimension $Q$, the definition of Lorentz spaces is completely analogous to the Euclidean case, replacing the Lebesgue measure on $\mathbb{R}^N$  by the Haar measure $dx$ of $\mathbb{G}$.
\end{remark}

The following lemma will be useful for our further analysis.

\begin{lem}\label{lemc}
Suppose that $|\nabla_{\mathbb{G}} u| \in L(Q,s)$. Then, there exists a positive constant $C>0$, such that
\begin{equation}\label{Lpestimate}
\|u\|_{L^\tau}
\le
C\,\tau^{1/s'}
\|\nabla_{\mathbb G}u\|_{L(Q,s)},
\qquad
\tau\ge1.
\end{equation}
\end{lem}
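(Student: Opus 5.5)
We will prove Lemma \ref{lemc} for $u\in W_0^{1,1}(\Omega)$ (extended by zero outside $\Omega$) by combining a pointwise rearrangement inequality with a one-dimensional Hardy-type estimate. This is the subelliptic analogue of Talenti's argument; the constant $C$ will depend on $Q$, $s$ and $|\Omega|$.

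\textbf{Step 1: an inequality for $u^*$.} The key input is the isoperimetric inequality on $\mathbb{G}$ (Pansu, Garofalo--Nhieu, Capogna--Danielli--Garofalo): horizontal perimeter $P_{\mathbb{G}}(E)\ge c_Q|E|^{(Q-1)/Q}$, which is equivalent to the case $p=1$ of \eqref{Sov}. We apply the coarea formula for $|\nabla_{\mathbb{G}}u|$ to the level sets $\{|u|>t\}$ and follow the usual Talenti/P\'olya--Szeg\H{o} argument. This should give, for a.e.\ $t\in(0,|\Omega|)$,
\[
u^{*}(t)\le C_Q\int_t^{|\Omega|}\sigma^{\frac1Q-1}\,g(\sigma)\,d\sigma,
\]
where $g$ is a function equimeasurable-dominated by $|\nabla_{\mathbb{G}}u|$, so that $\int_0^r g\le\int_0^r|\nabla_{\mathbb{G}}u|^*$.

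An alternative route avoids rearrangements of the gradient altogether. Use the representation $|u(x)|\le C\int_{\Omega}|\nabla_{\mathbb{G}}u(y)|\,d(y^{-1}x)^{1-Q}\,dy$, which follows from the fundamental solution $\Gamma$ of Folland recalled above. The kernel $d^{1-Q}$ lies in $L(Q',\infty)$. O'Neil's convolution inequality \cite{Oneil} then gives $u^{**}(t)\le C\bigl(t^{-1}\int_0^t f^* \,d\sigma\cdot t^{1/Q}+\int_t^{\infty}f^*(\sigma)\sigma^{1/Q-1}\,d\sigma\bigr)$, where $f=|\nabla_{\mathbb{G}}u|\chi_\Omega$. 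I would take this O'Neil route, since it uses only results already cited.

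\textbf{Step 2: $L^\tau$ estimate.} Write $h(\sigma)=f^*(\sigma)\sigma^{1/Q}$, so that $\|h\|_{L^s(d\sigma/\sigma)}=\|\nabla_{\mathbb{G}}u\|_{L(Q,s)}$. For the tail term, Hölder in the measure $d\sigma/\sigma$ gives
\[
\int_t^{|\Omega|}h(\sigma)\frac{d\sigma}{\sigma}\le \|h\|_{L^s(d\sigma/\sigma)}\bigl(\log(|\Omega|/t)\bigr)^{1/s'}.
\]
The first term is bounded by the same quantity, using the Hardy inequality and monotonicity of $f^*$, up to a constant. Hence
\[
u^*(t)\le C\|\nabla_{\mathbb{G}}u\|_{L(Q,s)}\bigl(1+\log(|\Omega|/t)\bigr)^{1/s'}.
\]
Raising to the power $\tau$ and integrating, with the substitution $t=|\Omega|e^{-r}$, yields
\[
\|u\|_{L^\tau}^\tau=\int_0^{|\Omega|}u^*(t)^\tau\,dt\le C^\tau\|\nabla_{\mathbb{G}}u\|^\tau_{L(Q,s)}|\Omega|\int_0^\infty(1+r)^{\tau/s'}e^{-r}\,dr.
\]
The last integral is at most $e\,\Gamma(\tau/s'+1)\le (c\tau)^{\tau/s'}$ by Stirling. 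Taking $\tau$-th roots gives \eqref{Lpestimate}, with $C$ absorbing $|\Omega|^{1/\tau}\le\max(1,|\Omega|)$.

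\textbf{Main obstacle.} The delicate part is Step 1: deriving the sharp logarithmic pointwise bound on $u^*$ in the subelliptic setting. The Euclidean Talenti proof uses symmetrization, which is unavailable on $\mathbb{G}$. I expect to handle this through the convolution representation together with O'Neil's inequality, which only requires the homogeneity of $d$ and $\nu(B(x,r))=r^Q$. Once that bound is in hand, Step 2 is a routine one-variable computation.
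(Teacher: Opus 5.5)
Your proof is correct and follows essentially the same route as the paper. Both arguments use the pointwise bound $|u|\le C\,I_1(|\nabla_{\mathbb G}u|)$ from Folland's fundamental solution, then O'Neil's convolution inequality for the kernel $d^{1-Q}\in L(Q',\infty)$, then a logarithmic bound on $u^*$, and finally integration producing $\Gamma(\tau/s'+1)$ followed by Stirling.

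The differences are minor. You read the logarithmic bound directly off O'Neil's pointwise estimate for $u^{**}$ via H\"older in $d\sigma/\sigma$, whereas the paper passes through $L(\infty,s)$ and the Alvino--Cianchi characterisation. Your factor $\bigl(1+\log(|\Omega|/t)\bigr)^{1/s'}$ also keeps the term $u^{**}(|\Omega|)=\|u\|_{L^1}/|\Omega|$, which the paper's pure $\log$ bound in Step 3 drops.
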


\begin{proof}
 The proof is divided into several steps. We proceed as follows.
 
\noindent
{\bf Step 1. Representation formula.} Since $u\in C_0^\infty(\Omega)$, the representation formula, described in Folland and Stein \cite[Chapter 1]{FollandStein}, implies that
\[
u(x)
=
\int_{\mathbb G}
\nabla_{\mathbb G}\Gamma
\left(y^{-1}\circ x\right)
\cdot
\nabla_{\mathbb G}u(y)
\,dy,
\]
where $\Gamma(x)=c_Qd(x)^{2-Q}$ denotes the fundamental solution of the sub--Laplacian on $\mathbb G$  such that $-\Delta_{\mathbb{G}}\Gamma=\delta_0(x)$ and furthermore $|\Delta_{\mathbb{G}}\Gamma (x)|\leq Cd(x)^{1-Q}$. Here $d$ denotes the homogeneous distance in the Carnot group associated to the Hörmander system of vector fields $\{X_1,\ldots,X_N\}$. Hence,
\[
|u(x)|
\le
C
\int_{\mathbb G}
\frac{|\nabla_{\mathbb G}u(y)|}
{d(y^{-1}\circ x)^{Q-1}}
dy.
\]
Now, introducing the Riesz potential of order one (see \cite{Follan1}), $I_1:L(Q,s)\longrightarrow L(\infty,s)$, defined by 
\[
I_1f(x)
=
\int_{\mathbb G}
\frac{f(y)}
{d(y^{-1}\circ x)^{Q-1}}
dy, \quad f\in C_0^{\infty}(\mathbb{G}).
\]
It follows that 
\begin{equation}
|u(x)|
\le
CI_1(|\nabla_{\mathbb G}u|)(x).
\label{representation}
\end{equation}

\noindent
{\bf Step 2. Lorentz estimate for the Riesz potential.} Let $K(x)=d(x)^{1-Q}$ and $\mu_K(\beta)=|\{x:K(x)>\beta\}|$. Note that $K$ is homogeneous of degree $1-Q$, and using the homogeneity of the distance $d$ and the volume formula of homogeneous balls on $\mathbb{G}$ (see Bonfiglioli, Lanconelli, and Uguzzoni \cite[Proposition 1.3.13]{Bonfiglioli}), we deduce that
\begin{equation}\label{mu}
\mu_K(\beta)=\omega_Q\beta^{-Q/(Q-1)},
\end{equation}
where $\omega_Q=|B(0,1)|$ is the volume (Haar measure) of the unit ball on $\mathbb{G}$.

Now, define $K^*:\mathbb{R}^+\to \mathbb{R}$ by $K^*(t)=\inf\{\beta:\mu_K(\beta)\leq t\}$. Using \eqref{mu}, we have that
\[
K^*(t)=\omega_Q^{\frac{Q-1}{Q}}
t^{-\frac{Q-1}{Q}}.
\]

By applying either, or O'Neil's convolution inequality
\cite{ONeil} to the convolution structure of homogeneous groups (see e.g. Folland and Stein \cite{FollandStein},
Bonfiglioli, Lanconelli, and Uguzzoni \cite{Bonfiglioli}), or Calder\'on-Zygmund theory,
one obtains from the convolution identity
\[
I_1f=K*f,
\]
 that 
\begin{equation}
\|I_1f\|_{L(\infty,s)}
\le
C
\|f\|_{L(Q,s)}.
\label{oneil}
\end{equation}
Combining (\ref{representation}) and (\ref{oneil}) gives
\begin{equation}
\|u\|_{L(\infty,s)}
\le
C
\|\nabla_{\mathbb G}u\|_{L(Q,s)}.
\label{criticalembedding}
\end{equation}

\noindent
{\bf Step 3. Rearrangement estimate.} By the characterisation of the Lorentz endpoint space due to
Alvino \cite{Alvino} and Cianchi \cite{Cianchi:1996}, we have that $u\in L(\infty,s)$, if and only if,
\[
\int_0^{|\Omega|}
\left(
u^{**}(t)-u^*(t)
\right)^s
\frac{dt}{t}
<
\infty.
\]
Moreover, if $\|u\|_{L(\infty,s)}<\infty$, then one has the inequality
\[
\forall t:\quad 0<t<|\Omega|, \quad u^*(t)
\le
C
\|u\|_{L(\infty,s)}
\left(
\log\frac{|\Omega|}{t}
\right)^{1/s'}.
\qquad
\]
Thus, the estimate (\ref{criticalembedding}) implies the logarithmic bound
\begin{equation}\label{eqc}
u^*(t)
\le
C
\|\nabla_{\mathbb G}u\|_{L(Q,s)}
\left(
\log\frac{|\Omega|}{t}
\right)^{1/s'}.
\end{equation}

\noindent
{\bf Step 4. Computation of the $L^\tau$ norm.} By definition, we have the property
\[
\|u\|_{L^\tau}^\tau
=
\int_0^{|\Omega|}
(u^*(t))^\tau dt.
\]
Hence, by \eqref{eqc} we deduce that
\begin{align}
\|u\|_{L^\tau}^\tau
&\le
C^\tau
\|\nabla_{\mathbb G}u\|_{L(Q,s)}^\tau
\int_0^{|\Omega|}
\left(
\log\frac{|\Omega|}{t}
\right)^{\tau/s'}
dt.
\end{align}
Note that by a suitable change of variables we have that
\[
\int_0^{|\Omega|}
\left(
\log\frac{|\Omega|}{t}
\right)^{\tau/s'}
dt
=
|\Omega|
\int_0^\infty
y^{\tau/s'}
e^{-y}
dy.
\]
The last integral is exactly expressed in terms of the Gamma function, that is, 

\[
\Gamma\!\left(
\frac{\tau}{s'}+1
\right)=\int_0^\infty
y^{\tau/s'}
e^{-y}
dy.
\]
Consequently, we have proved that
\[
\|u\|_{L^\tau}
\le
C
\Gamma
\left(
\frac{\tau}{s'}+1
\right)^{1/\tau}
\|\nabla_{\mathbb G}u\|_{L(Q,s)}.
\]
Finally, Stirling's formula implies that
\[
\Gamma
\left(
\frac{\tau}{s'}+1
\right)^{1/\tau}
\sim
\left(\frac{\tau}{s'}\right)^{1/s'},
\qquad
\tau\rightarrow\infty.
\]
Hence,
\[
\Gamma
\left(
\frac{\tau}{s'}+1
\right)^{1/\tau}
\le
C\tau^{1/s'}, \quad \text{ for every } \quad \tau\ge1.
\]
Therefore, we have the inequality \eqref{Lpestimate}. The proof of Lemma \ref{lemc} is complete.
\end{proof}

The following refinement of the Trudinger--Moser inequality
\cite{Alvino, Brezis, Ruzhansky:Suragan:Book} is of particular importance for us, compared with \eqref{TM} and with \cite{Moser, Trudinger} in the Euclidean space. For recent progress about Trudinger-Moser type inequalities, we refer the reader to  \cite{BMT2003,RY2019,RY2022b,RY2022a,VY2022} and to the comprehensive list of references therein.

\begin{lem}\label{lem4}
Assume $|\nabla_{\mathbb{G}} u| \in L(Q,s)$, for some $1<s<\infty$. Then, $e^{|u|^{\frac{s}{s-1}}}\in L^{1}(\Omega)$. Furthermore, there exists $\alpha_s>0$, such that
\begin{equation}\label{TMO}
\sup_{\|\nabla_{\mathbb{G}} u\|_{L(Q,s)}\leq 1}
\int_{\Omega}
e^{\alpha |u|^{\frac{s}{s-1}}}\,dx
\leq
C|\Omega|,
\qquad
\text{for every }
\alpha\leq \alpha_s.
\end{equation}
Here, $|\Omega|:=\int_{\Omega}dx$ and $C=C(Q,s)$ is a positive constant. Moreover $\alpha_s$ is the geometric quantity given by $\alpha_s
=
Q\left(\omega_Q^{1/Q}\right)^{\frac{s}{s-1}},$ where $\omega_Q=|B(0,1)|$ is the volume (Haar measure) of the unit ball on $\mathbb{G}$.
\end{lem}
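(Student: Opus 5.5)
The plan is to combine the pointwise rearrangement bound of Lemma \ref{lemc}, Step 3, with a direct computation of the integral through the decreasing rearrangement, in the spirit of Alvino's Euclidean argument. By density and homogeneity it suffices to treat $u\in C_0^\infty(\Omega)$ with $\|\nabla_{\mathbb G}u\|_{L(Q,s)}\le 1$. Since $t\mapsto e^{\alpha t^{s'}}$ is increasing, equimeasurability gives
\[
\int_\Omega e^{\alpha|u|^{s'}}\,dx=\int_0^{|\Omega|} e^{\alpha (u^*(t))^{s'}}\,dt .
\]
Everything therefore reduces to a sharp upper bound for $u^*(t)$ of the form
\[
u^*(t)\le A\left(\log\frac{|\Omega|}{t}\right)^{1/s'}+B,
\]
with explicit $A$ and a bounded remainder $B$.

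\textbf{Sharp constant.} I will not rely on the non-explicit constant in \eqref{eqc}. Instead I will track the constant through O'Neil's inequality. From Step 1 of Lemma \ref{lemc}, $|u|\le C_0 I_1(|\nabla_{\mathbb G}u|)$. O'Neil's lemma then gives
\[
u^{*}(t)\le C_0\left( \frac1t\int_0^t f^*(r)\,dr\int_t^{\infty}K^*(r)\,dr+\int_t^{|\Omega|} f^*(r)K^*(r)\,dr\right),
\]
with $f=|\nabla_{\mathbb G}u|$ and $K^*(r)=\omega_Q^{(Q-1)/Q}r^{-(Q-1)/Q}$ from \eqref{mu}. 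The second term is the dominant one. By H\"older in the measure $dr/r$,
\[
\int_t^{|\Omega|} f^*(r)r^{1/Q}\,\frac{dr}{r}\le \|f\|_{L(Q,s)}\left(\log\frac{|\Omega|}{t}\right)^{1/s'},
\]
which produces the logarithmic term with constant $A=C_0\,\omega_Q^{(Q-1)/Q}$. The first term should contribute only a bounded amount: the $r^{-(Q-1)/Q}$ decay combined with $f^*(r)r^{1/Q}$ being in $L^s(dr/r)$ gives an $O(1)$ bound after a Hardy-type inequality. I would need to check this and, if necessary, absorb it using $(a+b)^{s'}\le(1+\varepsilon)a^{s'}+C_\varepsilon b^{s'}$.

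\textbf{Integration.} Inserting the bound, and changing variables $t=|\Omega|e^{-y}$,
\[
\int_0^{|\Omega|}e^{\alpha u^*(t)^{s'}}\,dt\le C_\varepsilon|\Omega|\int_0^\infty e^{(\alpha(1+\varepsilon)A^{s'}-1)y}\,dy .
\]
This is bounded by $C|\Omega|$ whenever $\alpha(1+\varepsilon)A^{s'}<1$. That fixes an admissible threshold $\alpha_s$, which after normalising $C_0$ should match the stated $Q(\omega_Q^{1/Q})^{s'}$. The qualitative claim $e^{|u|^{s'}}\in L^1$ for arbitrary $u$ (with no norm restriction) follows from the standard splitting $u=u_1+u_2$, with $u_1$ bounded and $\|\nabla_{\mathbb G} u_2\|_{L(Q,s)}$ small, combined with the elementary inequality above. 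I plan to build this splitting by truncating $|\nabla u|$ near its large values, or by approximating $u$ by smooth functions.

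\textbf{Main obstacle.} The delicate step is the sharp constant: identifying exactly $\alpha_s$ in terms of $\omega_Q$ requires knowing the precise constant $C_0$ in the representation $|u|\le C_0 I_1|\nabla_{\mathbb G}u|$. On a general stratified group the gradient $|\nabla_{\mathbb G}\Gamma|$ is not a pure power of the gauge. I would first try to accept a non-sharp $\alpha_s$, which is all the existence theorem needs, and only then attempt to match the stated value via the homogeneity of $d$.
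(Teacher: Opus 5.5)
Your argument is correct in its essentials, but it takes a genuinely different route from the paper.

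The paper's proof of this lemma works with moments rather than with $u^*$. It expands $e^{\alpha|u|^{s'}}$ in its Taylor series and bounds each $\int_\Omega|u|^{ks'}\,dx$ through the growth $\|u\|_{L^\tau}\le C\tau^{1/s'}\|\nabla_{\mathbb G}u\|_{L(Q,s)}$ of Lemma~\ref{lemc} with $\tau=ks'$. It then sums using Stirling's formula. This gives convergence for $\alpha<(e\,s'C^{s'})^{-1}$, where $C$ is the unspecified constant of Lemma~\ref{lemc}.

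You follow Alvino instead: a pointwise bound $u^*(t)\le A(\log(|\Omega|/t))^{1/s'}+B$ from O'Neil's lemma, then direct integration of $e^{\alpha u^*(t)^{s'}}$. The two routes are equivalent in principle, since expanding your integrand reproduces the Gamma-function moments of Step 4 of Lemma~\ref{lemc}. Yours, however, buys two things:
\begin{itemize}
\item It keeps the constant explicit, giving the threshold $(C_0\,\omega_Q^{(Q-1)/Q})^{-s'}$ in terms of the kernel bound $|\nabla_{\mathbb G}\Gamma|\le C_0\,d^{1-Q}$, and it bypasses the non-explicit constant hidden in the $L(\infty,s)$ characterisation of Step 3.
\item Your splitting for $e^{|u|^{s'}}\in L^1$ with no norm restriction is an improvement. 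The paper's final normalisation only yields integrability of $\exp\bigl(\alpha_s|u|^{s'}/\|\nabla_{\mathbb G}u\|_{L(Q,s)}^{s'}\bigr)$, which says nothing when $\|\nabla_{\mathbb G}u\|_{L(Q,s)}^{s'}>\alpha_s$.
\end{itemize}
Implement the splitting on $f^*$ rather than on $u$, since truncating $\nabla_{\mathbb G}u$ does not produce a gradient. Choose $\tau$ with $\int_0^\tau(f^*(r)r^{1/Q})^s\,dr/r<\delta^s$, which is possible because $s<\infty$. Split the main integral at $\tau$ to get $u^*(t)\le C_0\,\omega_Q^{(Q-1)/Q}\,\delta\,(\log(|\Omega|/t))^{1/s'}+B_\tau$.

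Two points need correcting.

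\textbf{O'Neil's inequality.} You misquote it. The first term is $f^{**}(t)\int_0^tK^*(r)\,dr$, not $f^{**}(t)\int_t^\infty K^*(r)\,dr$. The latter is $+\infty$, because $K^*(r)=\omega_Q^{(Q-1)/Q}r^{-(Q-1)/Q}$ is not integrable at infinity. With the correct form the term equals $Q\,\omega_Q^{(Q-1)/Q}t^{1/Q}f^{**}(t)$. H\"older in $dr/r$ then gives $t^{1/Q}f^{**}(t)\le (Q'/s')^{1/s'}\|f\|_{L(Q,s)}$, so the term is bounded without any Hardy inequality.

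\textbf{The stated value of $\alpha_s$.} Your planned last step cannot work. The ratio of your threshold to $Q(\omega_Q^{1/Q})^{s'}$ is $Q^{-1}(C_0\,\omega_Q)^{-s'}$, so no fixed $C_0$ makes them equal for all $s$. Your method shows the stated value is admissible only under the kernel condition $C_0\,\omega_Q<Q^{-1/s'}$. The inequality must be strict, because your $(1+\varepsilon)$-absorption does not reach the endpoint $\alpha=A^{-s'}$. In $\mathbb{R}^n$, with the Newtonian representation and $d=|x|$, your threshold is Alvino's sharp $(n\kappa_n^{1/n})^{s'}$, where $\kappa_n$ is the volume of the unit ball. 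That is $n^{s'-1}$ times $Q(\omega_Q^{1/Q})^{s'}$.

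The paper does not establish that value either. It identifies the radius $(e\,s'C^{s'})^{-1}$ with $Q(\omega_Q^{1/Q})^{s'}$ through an asserted, unproved sharpness claim, and its series argument does not cover the endpoint. So both arguments genuinely prove only the existence of some $\alpha_s>0$. For that, your proposal is sound once O'Neil's inequality is quoted correctly.
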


\begin{proof}
By Lemma~\ref{lemc} there exists a positive constant $C>0$ such that
\begin{equation}\label{Lpestimate}
\|u\|_{L^p}
\le
C\,p^{1/s'}
\|\nabla_{\mathbb G}u\|_{L(Q,s)},
\qquad
p\ge1.
\end{equation}
Moreover, the constant is asymptotically sharp in the sense that
\[
\limsup_{p\to\infty}
\frac{\|u\|_{L^p}}
{p^{1/s'}\|\nabla_{\mathbb G}u\|_{L(Q,s)}}
=
\frac1{Q^{1/s'}\omega_Q^{1/Q}},
\]
which yields the optimal Trudinger--Moser constant $\alpha_s =Q(\omega_Q^{1/Q})^{s'}$.

Suppose that $\|\nabla_{\mathbb G}u\|_{L(Q,s)}
\le1$ and $s'=s/(s-1)$. Expanding the exponential function into its Taylor series gives
\[
e^{\alpha |u|^{s'}}
=
\sum_{k=0}^{\infty}
\frac{\alpha^k}{k!}|u|^{ks'}.
\]

Therefore,
\begin{equation}\label{series}
\int_\Omega
e^{\alpha |u|^{s'}}
dx
=
|\Omega|
+
\sum_{k=1}^{\infty}
\frac{\alpha^k}{k!}
\int_\Omega
|u|^{ks'}dx.
\end{equation}

Applying estimate \eqref{Lpestimate} with $p=ks'$, we obtain that $\|u\|_{L^{ks'}}
\le C(ks')^{1/s'}$, and consequently
\[
\int_\Omega
|u|^{ks'}dx
=
\|u\|_{L^{ks'}}^{ks'}
\le
(C^{s'}s')^k k^k.
\]
Substituting this estimate into \eqref{series} yields
\[
\int_\Omega
e^{\alpha |u|^{s'}}
dx
\le
|\Omega|
+
\sum_{k=1}^{\infty}
\frac{(\alpha C^{s'}s')^k k^k}{k!}.
\]
Using Stirling's formula
\[
k!
=
\sqrt{2\pi k}
\left(\frac{k}{e}\right)^k
(1+o(1)),
\]
we infer
\[
\frac{k^k}{k!}
=
\frac{e^k}{\sqrt{2\pi k}}
(1+o(1)).
\]
Hence
\[
\frac{(\alpha C^{s'}s')^k k^k}{k!}
\le
\frac{C}{\sqrt{k}}
\left(
e\alpha C^{s'}s'
\right)^k.
\]
Note that the series converges whenever $\alpha<\alpha_s$, where $\alpha_s
=Q(\omega_Q^{1/Q})^{s'}$. Therefore, if $\|\nabla_{\mathbb G}u\|_{L(s,Q)}\leq 1$ we conclude
\begin{equation}\label{exp}
\int_\Omega
e^{\alpha |u|^{s'}}
dx
\le
C|\Omega|,
\qquad
0<\alpha\le\alpha_s.
\end{equation}
Finally, if $\nabla_{\mathbb G}u\in L(Q,s)$, we define $v=\frac{u}{\|\nabla_{\mathbb G}u\|_{L(Q,s)}}$. Then $\|\nabla_{\mathbb G}v\|_{L(Q,s)}=1$, and applying the estimate \eqref{exp} to $v$ yields
\[
\int_\Omega
\exp\!\left(
\alpha_s
\frac{|u|^{s'}}
{\|\nabla_{\mathbb G}u\|_{L(Q,s)}^{s'}}
\right)
dx
<
\infty.
\]
In particular, $e^{|u|^{s'}}\in L^1(\Omega)$,
which completes the proof.
\end{proof}

The following lemma will be important  in our further analysis.
\begin{lem}\label{lem4.2}
Let $1<s<Q$, and let $1\leq \delta<\infty$. Then
\begin{equation*}
\forall u\in\mathbb{R}, \quad
|u|^{\delta}
\leq
\frac{1}{\alpha^{\frac{\delta(s-1)}{s}}}
\,C(\delta,s)\,
\exp\!\left(\alpha |u|^{\frac{s}{s-1}}\right),
\end{equation*}
where $C(\delta,s)
:=
\left[
\exp\!\left(\frac{\delta(s-1)}{s}\right)
\left(
\frac{s}{\delta(s-1)}
\right)
\right]^{-1}.
$
\end{lem}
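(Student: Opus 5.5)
The inequality is elementary and dimension-free, so I would prove it by a one-variable calculus argument that does not use any of the group structure. Set $s'=\frac{s}{s-1}$ and $\beta:=\frac{\delta}{s'}=\frac{\delta(s-1)}{s}>0$. The case $u=0$ is trivial, so assume $u\neq 0$ and put $t:=\alpha|u|^{s'}>0$. Then $|u|=(t/\alpha)^{1/s'}$, hence
\[
|u|^{\delta}=\alpha^{-\beta}\,t^{\beta},\qquad \exp\!\left(\alpha|u|^{\frac{s}{s-1}}\right)=e^{t}.
\]
The claim is therefore equivalent to the scalar bound $t^{\beta}e^{-t}\le C$ for all $t>0$, with a constant depending only on $\beta$, that is, only on $\delta$ and $s$.

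The second step maximises $g(t)=t^{\beta}e^{-t}$ on $(0,\infty)$. Its logarithmic derivative is $\beta/t-1$, so $g$ increases on $(0,\beta)$ and decreases on $(\beta,\infty)$. Since $g(t)\to0$ both as $t\to0^+$ and as $t\to\infty$, the global maximum is attained at $t=\beta$:
\[
\sup_{t>0}t^{\beta}e^{-t}=\beta^{\beta}e^{-\beta}.
\]
Substituting back gives, for every $u\in\mathbb{R}$,
\[
|u|^{\delta}\le \alpha^{-\beta}\,\beta^{\beta}e^{-\beta}\,\exp\!\left(\alpha|u|^{\frac{s}{s-1}}\right).
\]
This is the asserted inequality, and the constant is sharp.

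The only point needing care, which I regard as the main obstacle, is reconciling this optimal constant with the displayed $C(\delta,s)$. Unwinding the definition gives $C(\delta,s)=\bigl[e^{\beta}\beta^{-1}\bigr]^{-1}=\beta e^{-\beta}$. This agrees with the optimal constant $\beta^{\beta}e^{-\beta}$ exactly when $\beta=1$. For $\beta\neq1$ one has $\beta<\beta^{\beta}$, so the optimal constant is strictly larger than the printed one. Evaluating at $t=\beta$ then shows that the inequality with the printed constant actually fails for $\beta\neq1$. I would therefore state and prove the lemma with $C(\delta,s)=\beta^{\beta}e^{-\beta}=\bigl(\tfrac{\delta(s-1)}{s}\bigr)^{\frac{\delta(s-1)}{s}}e^{-\frac{\delta(s-1)}{s}}$, or equivalently with any constant at least this large. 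In later applications only the existence of some finite $C(\delta,s)$, with the explicit factor $\alpha^{-\delta(s-1)/s}$, matters, so this correction should not affect the subsequent arguments.
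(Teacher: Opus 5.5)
Your argument is correct and is essentially the paper's own. The paper minimises $f(t)=e^{\alpha t^{s/(s-1)}}t^{-\delta}$ at its unique critical point $t_0=(\beta/\alpha)^{(s-1)/s}$, which is exactly your maximisation of $t^{\beta}e^{-t}$ after the substitution $t=\alpha|u|^{s/(s-1)}$.

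Your diagnosis of the constant is also right. One has $1/f(t_0)=\alpha^{-\beta}\beta^{\beta}e^{-\beta}$, so the paper's argument itself yields your constant $\beta^{\beta}e^{-\beta}$. The printed $C(\delta,s)$ has lost the exponent $\beta=\frac{\delta(s-1)}{s}$ on the factor $\frac{s}{\delta(s-1)}$, and the lemma as printed fails whenever $\beta\neq 1$. Since only the finiteness of this constant is used later, e.g. in \eqref{eq23}, the correction is harmless.
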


\begin{proof}
Define $f(t)
=
\frac{\exp\!\left(\alpha t^{\frac{s}{s-1}}\right)}
     {t^{\delta}}$. Then
\[
f'(t)
=
\left[
\frac{\alpha s}{s-1}
t^{\frac{1}{s-1}}
-\delta
\right]
\frac{
\exp\!\left(\alpha t^{\frac{s}{s-1}}\right)
}
{t^{\delta+1}}.
\]
Hence, $f'(t_0)=0$ if, and only if,
$t_0
=
\left[
\frac{\delta(s-1)}
{\alpha s}
\right]^{\frac{s-1}{s}}
>0$, which is the only critical point of $f$, in particular, a point
$t_0$ where $f$ attains its minimal value. Therefore, $f(t)\geq f(t_0)$ for all $t>0$, or equivalently
\begin{equation*}
    \forall t \geq 0, \quad t^\delta \leq \frac{1}{\alpha^{\frac{\delta(s-1)}{s}}} C(\delta, s) e^{\alpha |t|^{\frac{s}{s-1}}},
\end{equation*}
and this concludes the result considering $t = |u|$.
\end{proof}

It is worth to remark that replacing $u$ by $u/\|\nabla_{\mathbb{G}} u\|_{L(Q,s)}$, by the Trudinger--Moser inequality \eqref{TMO} and Lemma~\ref{lem4}, we get the inequality
\begin{equation}\label{eq23}
    \|u\|_{L^\delta(\Omega)} \leq k_{Q,\delta,s} |\Omega|^{\frac{1}{\delta}} \|\nabla_{\mathbb{G}} u\|_{L(Q,s)} \quad \text{for all } u \in L(Q,s), 
\end{equation}
where $k_{Q,\delta,s} := \frac{1}{\alpha^{\frac{s-1}{s}}} (C(\delta, s) C)^{\frac{1}{\delta}}$ with $0 < \alpha \leq \alpha_s$.

 \subsection{Approximating the equation in finite dimensional spaces} \label{sec3} 

A result that will play a fundamental role in this section is the Brouwer Fixed Point Theorem, which is established below. The proof may be found in \cite{Kasavan}.

\begin{lem}\label{ponto:fixo}
Suppose that for some $R>0,$ $\Upsilon\colon \mathbb{R}^{m}\to \mathbb{R}^{m},$ is a continuous function such that $\langle \Upsilon(\eta),\eta\rangle\geq 0,$ for all $\eta$ in the sphere $\{\eta\in \mathbb{R}^m: |\eta|_m=R\}$ where $|\cdot|_m$ is the norm in $\mathbb{R}^m$. Then there exists $z_0\in\overline{B}_R(0):=\{\xi\in \mathbb{R}^{m}:|\xi|_m\leq R \},$ such that $\Upsilon(z_0)=0$. 
\end{lem}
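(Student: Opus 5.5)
This is the classical acute-angle (Brouwer-type) corollary: a continuous vector field pointing outward on a sphere must vanish inside the ball. I would argue by contradiction, reducing it directly to Brouwer's fixed point theorem on the closed ball $\overline{B}_R(0)$.

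Suppose $\Upsilon(\eta)\neq 0$ for every $\eta\in\overline{B}_R(0)$. Then the map
\[
\Psi(\eta):=-R\,\frac{\Upsilon(\eta)}{|\Upsilon(\eta)|_m},\qquad \eta\in\overline{B}_R(0),
\]
is well defined and continuous, since the denominator never vanishes on the compact set $\overline{B}_R(0)$. Its values lie on the sphere of radius $R$, so in particular $\Psi$ maps $\overline{B}_R(0)$ into itself. The ball is a compact convex subset of $\mathbb{R}^m$, homeomorphic to the closed unit ball. Hence Brouwer's fixed point theorem gives some $\eta_0\in\overline{B}_R(0)$ with $\Psi(\eta_0)=\eta_0$.

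Since $|\Psi(\eta_0)|_m=R$, the fixed point satisfies $|\eta_0|_m=R$, so it lies on the sphere where the hypothesis applies. Taking the inner product of the identity $\eta_0=\Psi(\eta_0)$ with $\eta_0$ yields
\[
R^2=|\eta_0|_m^2=\langle \Psi(\eta_0),\eta_0\rangle=-\frac{R}{|\Upsilon(\eta_0)|_m}\,\langle\Upsilon(\eta_0),\eta_0\rangle\le 0,
\]
where the inequality uses $\langle\Upsilon(\eta_0),\eta_0\rangle\ge 0$. This contradicts $R>0$. Therefore there exists $z_0\in\overline{B}_R(0)$ with $\Upsilon(z_0)=0$.

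The only points needing care are the continuity of the normalised map and the fact that the fixed point necessarily lands on the sphere. Both follow from the construction, so no step is a genuine obstacle once Brouwer's theorem is assumed.
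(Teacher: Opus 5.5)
Your proof is correct and is the standard argument: normalise $-\Upsilon$ onto the sphere of radius $R$, apply Brouwer, and observe that the fixed point lies on the sphere, which contradicts the sign hypothesis. The paper gives no proof of its own and only cites Kesavan's book, where this same argument appears. One implicit assumption is that $|\cdot|_m$ is the Euclidean norm, so that $|\eta_0|_m^2=\langle\eta_0,\eta_0\rangle$; if $|\cdot|_m$ is any other norm (as the paper's later identification with $\|\cdot\|_m$ suggests), the contradiction still follows from $\langle\eta_0,\eta_0\rangle>0$ for $\eta_0\neq 0$.
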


\subsection{Finite-dimensional spaces}
Since $1<q<\infty,$  $W_0^{1,q}(\Omega)$ is a reflexive and separable Banach space, and there is a Schauder basis $\mathfrak{B}=\{w_1,w_2,\ldots,w_{m},\ldots\}$ for $W_0^{1,q}(\Omega)$  satisfying 
\begin{equation*}
\langle w_i,w_j\rangle=\delta_{ij} \text{  and  } w_i\in L^{\infty}(\Omega),
\end{equation*}
where $\langle\cdot,\cdot\rangle$ is the usual inner product in $W_0^{1,q}(\Omega)$ and $\delta_{ij}$ is the Delta--Kroenecker (see \cite{Ruzhansky:Suragan:Book}). 
For each fixed $m\in\mathbb{N}$, define    
 \begin{equation*}
 \mathfrak{B}_m=span\{w_1,w_2,\ldots,w_m\},
 \end{equation*}
 to be the $m$-dimensional space generated by the system of functions\\ $\{w_1, w_2, \ldots, w_m\}$ and let as assume that it is  endowed with the norm $\|\cdot\|_m$
induced from $W_0^{1,q}(\Omega)$, see \eqref{defnor}. By \eqref{eqqn} and by Poincare inequality \eqref{Sov} we have the embedding $W_0^{1,q}(\Omega)\hookrightarrow L(Q,s)$. Indeed, one has that
$$ W_0^{1,q}(\Omega)\hookrightarrow L^q(\Omega) \hookrightarrow  L(Q,s).  $$
Furthermore, for each $v\in \mathfrak{B}_m$ we see that $|\nabla_{\mathbb{G}} v|\in L^p(\Omega)\hookrightarrow L(Q,s)\hookrightarrow L^Q(\Omega)$ where $1<s<\frac{Q}{Q-1}$.

Let $\eta=(\eta_1,\ldots,\eta_m) \in \mathbb{R}^{m}$. Notice that
\begin{equation}\label{defnor}
\|\eta\|_{m}:=\left\|\sum_{j=1}^m\eta_jw_j\right\|_{W_0^{1,q}(\Omega)}
\end{equation}
defines a norm in $\mathbb{R}^{m}$. Furthermore,  the spaces $(\mathfrak{B}_m, \|\cdot\|_{m})$  and $(\mathbb{R}^{m}, |\cdot|_{m})$ are isometrically isomorphic via the natural linear transformation
\begin{align*}
u=\sum_{j=1}^m\eta_jw_j\in \mathfrak{B}_m\mapsto \eta=(\eta_1,\ldots,\eta_m) \in \mathbb{R}^{m}.
\end{align*}

\subsection{Existence of solution in finite dimension.} In what follows, we investigate the existence of weak solutions to \eqref{prob:inicial} in suitable spaces of finite dimension. 
\begin{lem}\label{fin:lemma:bm}
    For every $m\in \mathbb{N}$, the nonlinear model \eqref{prob:inicial} has a weak solution $v_{m}\in \mathfrak{B}_m$.
\end{lem}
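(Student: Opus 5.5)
We prove Lemma \ref{fin:lemma:bm} by applying the Brouwer-type result of Lemma \ref{ponto:fixo} to a map $\Upsilon\colon\mathbb{R}^m\to\mathbb{R}^m$ that encodes the Galerkin equations. Here ``weak solution in $\mathfrak{B}_m$'' means a $v_m\in\mathfrak{B}_m$ satisfying the weak formulation of \eqref{prob:inicial} against every test function in $\mathfrak{B}_m$.

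Since the positivity constraint $u>0$ cannot be imposed directly, we first replace the nonlinearity by its positive truncation $f(u)=\lambda (u^+)^{p}+\exp(\alpha (u^+)^{Q/(Q-1)})$. For $\eta\in\mathbb{R}^m$, write $u=\sum_j\eta_jw_j$ and define
\[
\Upsilon_j(\eta)=\int_\Omega |\nabla_{\mathbb G}u|^{q-2}\nabla_{\mathbb G}u\cdot\nabla_{\mathbb G}w_j\,dx-\int_\Omega f(u)\,w_j\,dx,\qquad j=1,\ldots,m.
\]
A zero of $\Upsilon$ is precisely a Galerkin solution. Positivity of $v_m$ will be recovered later in the limit, for instance by testing with $u^-$, which will require an extra device at that stage.

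\textbf{Continuity of $\Upsilon$.} The gradient term is continuous in $\eta$ by dominated convergence, since all norms on $\mathfrak{B}_m$ are equivalent. For the nonlinear term, the continuity of $f$ gives pointwise convergence. For domination we use the Trudinger--Moser estimate of Lemma \ref{lem4}, applied through the embedding of $\mathfrak{B}_m$ into the space $\{|\nabla_{\mathbb G}v|\in L(Q,s)\}$ noted before \eqref{defnor}. On bounded sets of $\mathfrak{B}_m$ this controls $\exp(\alpha|u|^{Q'})$ in $L^r$ for some $r>1$. Combined with $w_j\in L^\infty$, Vitali's theorem then yields continuity.

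\textbf{Coercivity on a sphere.} We compute
\[
\langle\Upsilon(\eta),\eta\rangle=\|u\|^q_{W_0^{1,q}}-\lambda\int_\Omega (u^+)^{p+1}\,dx-\int_\Omega e^{\alpha (u^+)^{Q'}}u\,dx .
\]
The concave term is bounded by $\lambda C|\Omega|^{\cdot}\|u\|^{p+1}$ using \eqref{eq23} and the Poincaré/Sobolev embedding. For the exponential term we apply Hölder's inequality, then Lemma \ref{lem4} with the normalised function $u/\|\nabla_{\mathbb G}u\|_{L(Q,s)}$, and \eqref{eq23} for $\|u\|_{L^{r'}}$. This gives a bound of the form $C_1|\Omega|^{\cdot}\|u\|$, valid provided that $\alpha\, r\,\|\nabla_{\mathbb G}u\|^{Q'}_{L(Q,s)}\le\alpha_s$, which holds as long as $\|u\|\le R$ with $R$ fixed and $\alpha$ small.

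Hence, for $\|\eta\|_m=R$,
\[
\langle\Upsilon(\eta),\eta\rangle\ge R^q-\lambda C_2R^{p+1}-C_1R .
\]
Since $p+1<q$, we can choose $R$ large enough that $R^{q-1}>2C_1$. We then take $\alpha^*$ small enough that the Trudinger--Moser constraint holds on the ball of radius $R$, and $\lambda^*$ small enough that $\lambda C_2R^{p+1}<R^q/2-C_1R$. These choices are uniform in $m$, which also yields the bound $\|v_m\|\le R$ needed later. Lemma \ref{ponto:fixo} then produces $\eta_0$ with $|\eta_0|\le R$ and $\Upsilon(\eta_0)=0$; we set $v_m=\sum_j(\eta_0)_jw_j$.

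\textbf{Main obstacle.} The hard step is making the exponential estimate uniform. The Trudinger--Moser constant and the smallness of $\alpha$ must be chosen independently of $m$ and compatibly with $R$. This in turn needs the precise dependence of $C_1$ on $|\Omega|$ and $\alpha$ coming from \eqref{eq23} and \eqref{eqqn}; this dependence is what determines the definition of $\alpha^*$.
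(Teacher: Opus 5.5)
Your proposal follows the paper's route: the same truncated Galerkin map $\Upsilon$, Lemma~\ref{ponto:fixo}, H\"older plus the Lorentz Trudinger--Moser bound on the exponential term, then a radius, $\alpha^*$ and $\lambda^*$ in that order. The only difference is that you put the Brouwer sphere in the $W_0^{1,q}$-norm, whereas the paper puts it at $\|\nabla_{\mathbb G}v\|_{L(Q,s)}=r$.

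\textbf{The gap.} There is a genuine gap, and it is exactly the step you flag as the main obstacle. You need $\int_\Omega e^{\alpha r|u|^{Q'}}\,dx\le C|\Omega|$ for all $u\in\mathfrak B_m$ with $\|u\|_{W_0^{1,q}}\le R$, with constants independent of $m$. That requires $\|\nabla_{\mathbb G}u\|_{L(Q,s)}\le C\|\nabla_{\mathbb G}u\|_{L^q(\Omega)}$, i.e.\ $L^q(\Omega)\hookrightarrow L(Q,s)$ applied to $|\nabla_{\mathbb G}u|$.
\begin{itemize}
\item This holds for $q>Q$ (by \eqref{eqqn} with $\tau=q$), and for $q=Q$ only when $s\ge Q$.
\item For $1<q<Q$ it is false. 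In fact $W_0^{1,q}(\Omega)$ contains functions of arbitrarily small norm with $\int_\Omega e^{\alpha|u|^{Q'}}dx=\infty$: take $u\approx\varepsilon\, d(x_0^{-1}\circ x)^{-\beta}$ near an interior point $x_0$, with $\beta>0$ and $(\beta+1)q<Q$.
\item On $\mathfrak B_m$ the integral is finite, since $w_j\in L^\infty$ and all norms on $\mathfrak B_m$ are equivalent. But the constants then depend on $m$, so your $R$, $\alpha^*$, $\lambda^*$ depend on $m$.
\end{itemize}
That breaks the lemma as it is used. It must hold for fixed $(\lambda,\alpha)$ and all $m$ simultaneously, and the later limit passage needs $\|v_m\|\le R$ uniformly. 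Smallness of $\alpha$ cannot simply be dropped either: already for $m=1$, if $w_1\ge0$ and $\alpha$ is large, then $\Upsilon_1<0$ on all of $\mathbb R$.

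You should also fix an exponent mismatch. Lemma~\ref{lem4} controls $e^{\alpha|v|^{s'}}$ with $s'=s/(s-1)$, so your condition $\alpha r\|\nabla_{\mathbb G}u\|_{L(Q,s)}^{Q'}\le\alpha_s$ scales correctly only for $s=Q$. In general you need $s\le Q$, so that $A|v|^{Q'}\le\alpha_s|v|^{s'}+C(A)$. Combined with the embedding, this forces $s=Q$ when $q=Q$.

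\textbf{What your argument does prove.} As it stands, it gives the lemma with $m$-independent thresholds only for $q\ge Q$: any $s\le Q$ if $q>Q$, and $s=Q$ (the classical inequality \eqref{TM}) if $q=Q$. In that range it is complete and slightly cleaner than the paper's, because Brouwer in the $W_0^{1,q}$-norm gives $\|v_m\|_{W_0^{1,q}}\le R$ directly, rather than by testing the equation with $v_m$.

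The paper's proof has the same limitation. Its coercivity estimate $\int_\Omega|\nabla_{\mathbb G}v|^q\,dx\ge c\,\|\nabla_{\mathbb G}v\|_{L(Q,s)}^q$ is again \eqref{eqqn} with $\tau=q$, which requires $q>Q$.

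For $1<q<Q$ a different device is needed. One option is to truncate the exponential at a level $M$; Galerkin then works with no smallness, since $p+1<q$. You would then show the resulting solution satisfies $\|u\|_{L^\infty}\le M$ for small $\alpha,\lambda$. Another option is a sub/supersolution argument with a bounded supersolution.

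A minor point: continuity of $\Upsilon$ needs no Trudinger--Moser inequality, because $w_j\in L^\infty$ bounds $u$ uniformly for $\eta$ in bounded sets. The embedding of $\mathfrak B_m$ into $\{|\nabla_{\mathbb G}v|\in L(Q,s)\}$ that you invoke there is itself unjustified when $q<Q$.
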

\begin{proof}
  In fact, for each positive integer $n$, by considering the aforementioned identifications, we define the operator $\Upsilon\colon \mathbb{R}^{m}\to \mathbb{R}^{m},$ such that 
 \begin{equation*}
 \Upsilon(\eta):=(\Upsilon_1(\eta),\Upsilon_2(\eta),\ldots,\Upsilon_{m}(\eta)),
 \end{equation*}
 where $\eta=(\eta_1,\eta_2,\ldots,\eta_{m})\in \mathbb{R}^{m}$, and with,
  \begin{align*} 
 \Upsilon_j(\eta)&=\int_{\Omega}|\nabla_{\mathbb{G}} v|^{q-2}\nabla_{\mathbb{G}} v\nabla_{\mathbb{G}}  w_j\,dx-\lambda\int_{\Omega}v_+^{p}w_j\,dx-\int_\Omega  \text{exp}({\alpha |v_+|^{\frac{Q}{Q-1}}})w_j\,dx,
 \end{align*}
for each $j=1,2,\ldots,m$, where the relation between $v$ and $\eta$ is implicitly defined by  the identity $v=\sum_{i=1}^{m}\eta_iw_i\in \mathfrak{B}_{m}$.

In view of the Trudinger-Moser inequality \eqref{TMO} and standard arguments we have that $\Upsilon$ is a continuous operator, i.e., given $(\eta_k)$ in $\mathbb{R}^m$ and $\eta\in \mathbb{R}^m$ such that $\eta_k\to \eta$,  we obtain that $\Upsilon(\eta_k) \to \Upsilon(\eta)$.

Furthermore, if below  $\langle \cdot,\cdot \rangle$ denotes the usual inner product in the space $W_0^{1,q}(\Omega)$, one has that
 \begin{align*}
\langle \Upsilon(\eta),\eta\rangle& =\sum_{j=1}^{m}\Upsilon_j(\eta)\eta_j\\
&=\int_{\Omega}|\nabla_{\mathbb{G}} v|^{q}\,dx-\lambda\int_{\Omega}v_+^{p}v\,dx-\int_\Omega  \text{exp}({\alpha |v_+|^{\frac{Q}{Q-1}}})v\,dx\nonumber,
  \end{align*}
where $v_{+} =\max\{0,v\}$ and $v_-=v_+-v$. Now, we estimate each term above in this expansion of $\langle \Upsilon(\eta),\eta\rangle$.

\noindent Since $0<p<q-1$, then  by \eqref{eq23} applied to $\delta=p+1,$ we obtain 
\begin{equation}\label{eq:13}
\int_{\Omega}v_+^{p}v\,dx\leq \int_\Omega|v|^{p+1}\,dx=\|v\|_{L^{p+1}(\Omega)}^{p+1}\leq k_{Q,p+1,s}^{p+1} |\Omega| \|\nabla_{\mathbb{G}} u\|_{L(Q,s)}^{p+1}.
\end{equation}

\noindent By H{\"o}lder inequality,  we have that 
 \begin{align}\label{eq:14}
\int_{\Omega}\text{exp}(\alpha |v_+|^{\frac{Q}{Q-1}})v\,dx & \leq \int_{\Omega}\text{exp}(\alpha |v|^{\frac{Q}{Q-1}})v\,dx\nonumber\\
&\leq \left( \int_{\Omega}|v|^{Q'}\,dx\right)^{\frac{1}{Q'}}\left( \int_{\Omega}\text{exp}(\alpha Q |v|^{\frac{Q}{Q-1}}\,dx)\right)^{\frac{1}{Q}}\\
 &=  \|v\|_{L^{Q'}(\Omega)}\left( \int_{\Omega}\text{exp}(\alpha Q |v|^{\frac{Q}{Q-1}}\,dx)\right)^{\frac{1}{Q}},\nonumber
\end{align}
where $\frac{1}{Q'}+\frac{1}{Q}=1$.

Now, since $Q'\geq 1$, by taking $\delta=Q'$ in inequality \eqref{eq23}, we deduce that 
\begin{equation}\label{eeq3}
\|v\|_{L^{Q'}(\Omega)}\leq k_{Q,Q',s}|\Omega |^{\frac{1}{Q'}}\|\nabla_{\mathbb{G}} v\|_{L(Q,s)}.
\end{equation}
It follows from \eqref{eqqn}, \eqref{eq:13},  \eqref{eq:14}, and \eqref{eeq3} that
\begin{align}\label{eq.new}
\langle \Upsilon(\eta),\eta\rangle &\geq C_{Q,q,s}^q\|\nabla_{\mathbb{G}} v\|_{L(Q,s)}^q-\lambda k_{Q,p+1,s}^{p+1}|\Omega |\|\nabla_{\mathbb{G}} v\|_{L(Q,s)}^{p+1}\\
&-k_{Q,Q',s}|\Omega |^{\frac{1}{Q'}}\|\nabla_{\mathbb{G}} v\|_{L(Q,s)}\left( \int_{\Omega}\text{exp}(\alpha Q |v|^{\frac{Q}{Q-1}}\,dx)\right)^{\frac{1}{Q}}\nonumber.
\end{align}
Suppose that $\|\nabla_{\mathbb{G}} v\|_{L(Q,s)}=r$ for some $r>0$ to be fixed later. We have 
\begin{align}\label{Eq:20:11}
\int_{\Omega}\text{exp}\left(\alpha Q|v|^{\frac{N}{N-1}}\right)\,dx&=\int_{\Omega}\text{exp}\left(\alpha Qr^{\frac{Q}{Q-1}}\left(\frac{|v|}{\|\nabla_{\mathbb{G}} v\|_{L(Q,s)}}\right)^{\frac{Q}{Q-1}}\right)\,dx\\ \nonumber
&\leq \int_{\Omega}\text{exp}\left(\alpha Qr^{\frac{Q}{Q-1}}\left(\frac{|v|}{\|\nabla_{\mathbb{G}} v\|_{L(Q,s)}}\right)^{\frac{Q}{Q-1}}\right)\,dx.
\end{align}
By applying the Trudinger-Moser inequality \eqref{TMO}, we must have $\alpha Q r^{\frac{Q}{Q-1}}\leq \alpha_Q$. Consequently, the above estimates hold provided that the following inequality
\begin{equation*}
r\leq \frac{1}{2}\left(\frac{\alpha_Q}{\alpha Q}\right)^{\frac{Q-1}{Q}},
\end{equation*} 
holds true. So, we obtain that
\begin{align*}\label{Eqnuevo}
& \int_{\Omega}\text{exp}\left(\alpha Qr^{\frac{Q}{Q-1}}\left(\frac{|v|}{\|\nabla_{\mathbb{G}} v\|_{L(Q,s)}}\right)^{\frac{Q}{Q-1}}\right)\,dx\nonumber\\
&\leq  \sup\limits_{\|\nabla_{\mathbb{G}} z\|_{L(Q,s)}\leq 1}\int_{\Omega}\text{exp}\left(\alpha Q|z|^{\frac{Q}{Q-1}}\right)\,dx\\
&\leq C(Q)|\Omega|\nonumber
\end{align*}
and therefore we conclude that
\begin{equation*}\label{eqq}
\int_{\Omega}\text{exp}\left(\alpha Q|v|^{\frac{Q}{Q-1}}\right)\,dx\leq C(Q)|\Omega|.
\end{equation*}
Hence, by \eqref{eq.new} and using that $\frac{1}{Q}+\frac{1}{Q'}=1$, we deduce that
\begin{equation*}\label{Eq:20:1}
\langle \Upsilon(\eta),\eta\rangle\geq C_{Q,q,s}^q r^q-\lambda k_{Q,p+1,s}^{p+1}|\Omega |r^{p+1}-k_{Q,Q',s}C^{\frac{1}{Q}}(Q)|\Omega|r.
\end{equation*}
Now, if one takes $r$ such that  
\begin{equation*}
r\geq \left[\frac{4k_{Q,Q',s}C^{\frac{1}{Q}}(Q)|\Omega|}{C_{Q,q,s}^q}\right]^{\frac{1}{Q-1}},
\end{equation*}
then 
\begin{equation*}
C_{Q,q,s}^qr^q-2k_{Q,Q',s}C^{\frac{1}{Q}}(Q)|\Omega|r\geq \frac{C_{Q,q,s}^q r^q}{2}.
\end{equation*}
Moreover, if we consider 
\begin{equation}\label{alpa}
\alpha^*=\frac{\alpha_QC_{Q,q,s}^{\frac{q}{(Q-1)^2}}}{2Q\left[4k_{Q,Q',s}C^{\frac{1}{Q}}(Q)|\Omega|\right]^{\frac{1}{(Q-1)^2}}},
\end{equation}
then 
\begin{equation*}
\forall \alpha\in (0,\alpha^*), \quad\quad\left[\frac{4k_{Q,Q',s}C^{\frac{1}{Q}}(Q)|\Omega|}{C_{Q,q,s}^q}\right]^{\frac{1}{Q-1}} <\frac{1}{2}\left(\frac{\alpha_Q}{\alpha Q}\right)^{\frac{Q-1}{Q}}.
\end{equation*}
 Therefore, for $\alpha\in (0,\alpha^*)$, we may choose $r>0$ such that 
\begin{equation}\label{defr}
\left[\frac{4k_{Q,Q',s}C^{\frac{1}{Q}}(Q)|\Omega|}{C_{Q,q,s}^q}\right]^{\frac{1}{Q-1}}\leq r <\frac{1}{2}\left(\frac{\alpha_Q}{\alpha Q}\right)^{\frac{Q-1}{Q}},
\end{equation}
and so we have proved that
\begin{equation*}
\langle \Upsilon(\eta),\eta\rangle\geq C_{Q,q,s}^q \frac{r^q}{2}-\lambda k_{Q,p+1,s}^{p+1}|\Omega |r^{p+1}.
\end{equation*}
Defining $\rho_1=C_{Q,q,s}^q \frac{r^q}{4}-\lambda k_{Q,p+1,s}^{p+1}|\Omega |r^{p+1}$, if one take
\begin{equation*}
\lambda^*=\frac{r^{(q-1)-p}}{4k_{Q,p+1,s}^{p+1}|\Omega |}>0, 
\end{equation*}
then $\rho_1>0$, for every $\lambda<\lambda^*$.

\begin{remark}\label{alpa1} Note that by the definition of $\alpha^*$  in \eqref{alpa}, if $|\Omega|$ is sufficiently small or large, then $\alpha^*$ is small or large, respectively. Thus, we are in the subcritical or supercritical range, respectively. 
\end{remark}
Continuing with the proof of Lemma \ref{fin:lemma:bm}, let $\eta \in \mathbb{R}^{m}$ be such that $|\eta|_m=r$. Then for $\lambda<\lambda^*$  we deduce that
\begin{equation*}
\langle \Upsilon(\eta),\eta\rangle \geq \frac{\rho_1}{2}>0.
\end{equation*}
 By the Brouwer Fixed Point Theorem, see Lemma~\ref{ponto:fixo}, for every $n\in \mathbb{N}$, there exists $y\in \mathbb{R}^{m}$ with $|y|\leq r$, such that $\Upsilon(y)=0$, that is, there exists $v_m\in \mathfrak{B}_m$, verifying  the inequality
\begin{equation}\label{eqr}
\|\nabla_{\mathbb{G}} v_m\|_{L(Q,s)}\leq r \quad \text{  for every  } n\in \mathbb{N},
\end{equation}
and such that,
  \begin{align}\label{Eq:222}
 \int_{\Omega}|\nabla_{\mathbb{G}} v_m|^{q-2}\nabla_{\mathbb{G}} v_m\nabla_{\mathbb{G}} \psi\,dx&=\lambda\int_{\Omega}v_{m+}^{p}\psi\,dx+\int_\Omega  \text{exp}({\alpha |v_{n}|^{\frac{Q}{Q-1}}})\psi\,dx,
\end{align}
for every $\psi\in \mathfrak{B}_m$. The proof is complete.
\end{proof}
\subsection{Existence of weak solutions} Here we use the machinery developed before in order to prove that the boundary-value problem \eqref{prob:inicial}
admits at least one  positive weak solution $u\in W_0^{1,q}(\Omega)$.

\begin{proof}[Proof of Theorem ~\ref{T.Gb}]
We have that the equation \eqref{prob:inicial} has a sequence of positive solutions $v_m\in \mathfrak{B}_m$  for each $m\in \mathbb{N}$.
 
 Now, we show that the sequence $(v_m)$ converges to a weak solution $v \in W_0^{1,q}(\Omega)$ of the problem \eqref{prob:inicial}. In effect, according to the estimate  in \eqref{eqr}, by \eqref{eq20} and choosing $\psi=v_m$ in \eqref{Eq:222}, we obtain
\begin{align*}
  \int_{\Omega}|\nabla_{\mathbb{G}} v_m|^{q}\,dx&\leq \lambda k_{Q,p+1,s}^{p+1}|\Omega |\|\nabla_{\mathbb{G}} v_m\|_{L(Q,s)}^{p+1}\\
  &+k_{Q,Q',s}|\Omega |^{\frac{1}{Q'}}\|\nabla_{\mathbb{G}} v_m\|_{L(Q,s)}\left( \int_{\Omega}\text{exp}(\alpha Q |v_m|^{\frac{Q}{Q-1}})\,dx\right)^{\frac{1}{Q}}.
\end{align*}
Since $r$ does not depend on $m$ and the sequence $(|\nabla_{\mathbb{G}} v_m|)$ is a bounded sequence  in $L(Q,s)$, then
\begin{align*}
  \int_{\Omega}|\nabla_{\mathbb{G}} v_m|^{q}\,dx&\leq \lambda k_{Q,p+1,s}^{p+1}|\Omega |r^{p+1}+k_{Q,Q',s}C^{\frac{1}{Q}}(Q)|\Omega|r.
\end{align*}
Thus, up to a subsequence, there exists $v\in W_0^{1,q}(\Omega)$ such that
\begin{align}
& v_m\rightharpoonup v \,\, \text{ weakly in } W_0^{1,q}(\Omega)\label{Eq:23ss}.
\end{align}
By the Sobolev embedding theorem (see \cite{Ruzhansky:Suragan:Book}),  for $1\leq \sigma<\infty$, we have that
\begin{align}
& v_m\to v \,\, \text{ in } L^{\sigma}(\Omega)\,\, \text{ and a.e. in } \Omega.\label{Eq:24b}
\end{align}

\noindent From \eqref{eqr} and \eqref{Eq:23ss}, we can conclude that 
\begin{align}
 \|v\|_{W_0^{1,q}(\Omega)}&\leq\liminf\limits_{m\to \infty}\|v_m\|_{W_0^{1,q}(\Omega)}\label{eq:r11}\\
 &\leq \lambda k_{Q,p+1,s}^{p+1}|\Omega |r^{p+1}+k_{Q,Q',s}C^{\frac{1}{Q}}(Q)|\Omega|r,\,\, \forall m\in \mathbb{N}\nonumber.
\end{align}

We claim that the sequence $(v_m)$ is such that 
\begin{align}
& v_m\to v \,\,\text{  in  } W_0^{1,q}(\Omega).\label{Eq:23cc}
\end{align}

\noindent In fact, since $\mathfrak{B}=\{w_1,w_2,\ldots,w_m,\ldots\}$ is a Schauder basis of $W_0^{1,q}(\Omega)$, for every $v\in W_0^{1,q}(\Omega)$ there exists  unique sequence $(a_n)$ in $\mathbb{R}$ such that
$v=\sum_{j=1}^\infty a_jw_j$. Thus, we find that
\begin{equation}\label{Eq:23n}
\zeta_m=\sum_{j=1}^m a_jw_j\to v,  \text{ in } W_0^{1,q}(\Omega),  \text{ as } m\to \infty.
\end{equation}
Let $\psi=(v_m-\zeta_m)\in \mathfrak{B}_m$ be the test function in \eqref{Eq:222}. Then we have that
\begin{align}\label{k1}
\int_{\Omega}|\nabla_{\mathbb{G}} v_m|^{q-2}\nabla_{\mathbb{G}} v_m\nabla_{\mathbb{G}} (v_m-\zeta_m)\,dx&=\lambda\int_{\Omega}v_{m+}^{p}(v_m-\zeta_m)\,dx\\
&+\int_\Omega  \text{exp}({\alpha_1 |v_{n}|^{\frac{Q}{Q-1}}})(v_m-\zeta_m)\,dx.\nonumber
\end{align}
By virtue of \eqref{Eq:24b} and \eqref{Eq:23n} we obtain that
\begin{equation}\label{aa1}
\int_{\Omega}v_{m+}^{p}(v_m-\zeta_m)\,dx\to 0 \text{  as  } m\to\infty.
\end{equation} 

\noindent Moreover, by applying  Hölder inequality and the Sobolev embedding theorem, see \cite{Ruzhansky:Suragan:Book}, we have that
\begin{align*}\label{equa77}
\int_{\Omega}\text{exp}(\alpha_2 Q'|v_m|^{\frac{Q}{Q-1}})\,dx&\leq \int_{\Omega}\text{exp}(\alpha_2 Q'|v_m|^{\frac{Q}{Q-1}})\,dx\\
&\leq \left(\int_{\Omega}1\,dx\right)^{\frac{Q-2}{Q-1}}\left(\int_\Omega\text{exp}(\alpha Q |v_m|^{\frac{Q}{Q-1}})\,dx\right)^{\frac{1}{Q-1}}\nonumber\\
&= |\Omega|^{\frac{Q-2}{Q-1}}\left(\int_\Omega\text{exp}(\alpha Q |v_m|^{\frac{Q}{Q-1}})\,dx\right)^{\frac{1}{Q-1}},\nonumber
\end{align*} 
where $Q'$ is defined by the equality $\frac{1}{Q'}+\frac{1}{Q}=1$.

It follows from \eqref{eq:r11}  and Trudinger--Moser inequality \eqref{TM} that 
\begin{equation*}\label{equa1}
\int_{\Omega}\text{exp}(\alpha_2 Q'|v_m|^{\frac{Q}{Q-1}})\,dx\leq C_1, \,\, \forall  m\in\mathbb{N}. 
\end{equation*}
where $C_1$ does not depend on $m$.

\noindent Also, by \eqref{Eq:24b}, we have that
\begin{align*}
 & \text{exp}(\alpha |v_m|^{\frac{Q}{Q-1}})\to \text{exp}(\alpha |v|^{\frac{Q}{Q-1}})\text{ a.e. in }  \Omega.
\end{align*}
Hence, from \cite[Theorem 13.44]{Stromberg} we get
\begin{align}\label{aa20}
&\text{exp}(\alpha |v_m|^{\frac{Q}{Q-1}})\rightharpoonup \text{exp}(\alpha |v|^{\frac{Q}{Q-1}}) \text{ weakly in } L^{Q'}(\Omega).
\end{align}
So we have from \eqref{Eq:23n} and \eqref{aa20} that 
\begin{equation}\label{aa3}  
\int_\Omega\text{exp}(\alpha |v_m|^{\frac{Q}{Q-1}})(v_m-\zeta_m)\,dx\to 0 \text{ as } n\to\infty.
\end{equation}
In view of \eqref{aa1} and \eqref{aa3} we get
\begin{equation}\label{aa44}
\int_{\Omega}|\nabla_{\mathbb{G}} v_m|^{q-2}\nabla_{\mathbb{G}} v_m\nabla_{\mathbb{G}} (v_m-\zeta_m)\,dx\to 0 \text{ as } n\to \infty.
\end{equation} 
By virtue of \eqref{aa44} we obtain
\begin{equation}\label{e1}
\int_{\Omega}|\nabla_{\mathbb{G}} v_m|^{q-2}\nabla_{\mathbb{G}} v_m\nabla_{\mathbb{G}} (v_m-v)\,dx\to 0 \text{ as } n\to \infty.
\end{equation}
Notice that the operator $A(u)=|\nabla_{\mathbb{G}} u|^{q-2}\nabla_{\mathbb{G}} u$, associated with the $q$--sub--Laplacian, satisfies the $(S_+)$--property, see  \cite[Proposition 3.5]{Motreanu}. Since $v_m\rightharpoonup v$, weakly in $W_0^{1,q}(\Omega)$, and
\[
\int_{\Omega} |\nabla_{\mathbb{G}}(v_m)|^{q-2}\nabla_{\mathbb{G}}(v_m)\nabla_{\mathbb{G}}\bigl(v_m-\zeta_m\bigr)\,dx \to 0,
\]
it follows from the $(S_+)$--property that
\[
v_m\to v \quad \text{strongly in } W_0^{1,q}(\Omega).
\]
Therefore, we conclude that  \eqref{Eq:23cc} has been verified.

Let $k\in \mathbb{N}$. Then for every $m\geq k$, we have that
 \begin{align}\label{Eq:255}
 \int_{\Omega}|\nabla_{\mathbb{G}} v_m|^{q-2}\nabla_{\mathbb{G}} v_m\nabla_{\mathbb{G}} \psi_k\,dx&=\lambda\int_{\Omega}v_{m+}^{p}\psi_k\,dx\\
 &+\int_\Omega  \text{exp}({\alpha |v_{n}|^{\frac{Q}{Q-1}}})\psi_k\,dx\nonumber
 \end{align}
for every $\psi_k\in \mathfrak{B}_k$.

Using \eqref{Eq:23ss} we obtain
\begin{equation}\label{Eq:266}
 \int_{\Omega}|\nabla_{\mathbb{G}} v_m|^{q-2}\nabla_{\mathbb{G}} v_m\nabla_{\mathbb{G}} \psi_k\,dx\to \int_{\Omega}|\nabla_{\mathbb{G}} v|^{q-2}\nabla_{\mathbb{G}} v\nabla_{\mathbb{G}} \psi_k\,dx
\end{equation}
as $m\to \infty$.

On the other hand, it follows from  \eqref{aa20}, \eqref{Eq:266} and the Sobolev compact embedding theorem (see \cite{Ruzhansky:Suragan:Book}), letting $m\to\infty$ in \eqref{Eq:255}, that 
 \begin{align*}\label{Equ:255}
\int_{\Omega}|\nabla_{\mathbb{G}} v|^{q-2}\nabla_{\mathbb{G}} v\nabla_{\mathbb{G}} \psi_k\,dx&=\lambda\int_{\Omega}f(v_{+})^{p}f^\prime(v_+)\psi_k\,dx\\
&+\int_\Omega  \text{exp}({\alpha |v|^{\frac{Q}{Q-1}}})^\prime(v)\psi_k\,dx
 \end{align*}
for every $\psi_k\in \mathfrak{B}_k$.

Since $[\mathfrak{B}_k]_{k\in \mathbb{N}}$ is dense in $W_0^{1,q}(\Omega)$, we conclude that 
 \begin{equation}\label{Eq:300}
\int_{\Omega}|\nabla_{\mathbb{G}} v|^{q -2}\nabla_{\mathbb{G}} v\nabla_{\mathbb{G}} \psi\,dx=\lambda\int_{\Omega}v_{+}^{p}\psi\,dx+\int_{\Omega}  \text{exp}({\alpha |v|^{\frac{Q}{Q-1}}})\psi\,dx
\end{equation}
for every $\psi\in W_0^{1,q}(\Omega)$.

Furthermore, $v\geq 0$ a.e. in $\Omega$. In fact, since $v_-\in W^{1,q}_0(\Omega)$, then from \eqref{Eq:300} we obtain that
 \begin{align*}
\int_{\Omega}|\nabla_{\mathbb{G}} v|^{q-2}\nabla_{\mathbb{G}} v\nabla_{\mathbb{G}} v_-\,dx=\lambda\int_{\Omega}v_{+}^{p}v_-\,dx+\int_\Omega  \text{exp}({\alpha |v|^{\frac{Q}{Q-1}}})v_-\,dx.
\end{align*}
Hence, we get
\begin{align*}
-\int_{\Omega}|\nabla_{\mathbb{G}} v_-|^q\,dx&\geq \int_{\Omega}|\nabla_{\mathbb{G}} v|^{q-2}\nabla_{\mathbb{G}} v\nabla_{\mathbb{G}} v_-\,dx\\
&=\int_\Omega  \text{exp}({\alpha |v|^{\frac{Q}{Q-1}}})v_-\,dx\geq 0,
 \end{align*}
implying that $v_-=0$. Since $v\neq 0$, by the strong maximum principle, we have that  $v>0$ in $\Omega$, see e.g. Bony \cite{Bony}.
Therefore, we deduce that  $v$ is a positive solution to  \eqref{prob:inicial}. The proof of Theorem~\ref{T.Gb} is complete. 
 \end{proof}

\vspace{0.2cm}
\noindent\textbf{Conflict of interests statement - Data Availability Statements.}  The authors state that there is no conflict of interest.  Data sharing does not apply to this article as no datasets were generated or
analysed during the current study.
\\


\end{document}